\newcommand{%
    
    \import{./}{.pdf_tex}
}[1]{%
    
    \import{./}{#1.pdf_tex}
}
\newenvironment{psmallmatrix}
  {\left(\begin{smallmatrix}}
  {\end{smallmatrix}\right)}
\newtheorem{example}{Example}[section]
\newcommand{\E}{\mathbb{E}}   
\newcommand{\Cov}{\mathrm{Cov}}
\newcommand{\Law}{\mathrm{Law}}
\newcommand{\Pp}{\mathcal{P}} 
\newcommand{\R}{\mathbb{R}}   
\newcommand{\N}{\mathbb{N}}   
\newcommand{\C}{\mathbb{C}}
\newcommand{\indep}{\perp \!\!\! \perp}
\DeclareMathOperator{\spann}{span}
\setlist[enumerate]{leftmargin=.5in}
\setlist[itemize]{leftmargin=.5in}
\crefname{hypothesis}{Hypothesis}{Hypotheses}
\author{Frederic J.\ N.\ Jorgensen, Youssef Marzouk}
\title{Exact affine conditioning beyond Gaussians: a unique characterization of the ensemble Kalman update\thanks{Submitted to the editors  April 26, 2026.
\funding{The work of the first author was supported by the Singapore-MIT Alliance for Research and Technology (SMART): Wafer-scale Integrated Sensing Devices based on Optoelectronic Metasurfaces (WISDOM) Interdisciplinary Research Group. The work of both authors was supported by the Office of Naval Research, SIMDA (Sea Ice Modeling and Data Assimilation) MURI, award number N00014-20-1-2595.}
% FJ acknowledges support by the Singapore-MIT Alliance for Research and Technology (SMART): Wafer-scale Integrated Sensing Devices based on Optoelectronic Metasurfaces (WISDOM) Interdisciplinary Research Group. YMM and FJ acknowledge support from the Office of Naval Research, SIMDA (Sea Ice Modeling and Data Assimilation) MURI, award number N00014-20-1-2595.}
}
}
\author{Frederic J.\ N.\ Jorgensen\thanks{Department of Mathematics, Massachusetts Institute of Technology
  (\email{fjorgen@mit.edu}).}
\and Youssef M.\ Marzouk\thanks{Laboratory for Information and Decision Systems, Massachusetts Institute of Technology
  (\email{ymarz@mit.edu}).
  % , \url{https://uqgroup.mit.edu/}).}
  }
% \and Jane E. Smith\footnotemark[3]}
}
\newcommand*{\addFileDependency}[1]{% argument=file name and extension
  \typeout{(#1)}% latexmk will find this if $recorder=0 (however, in that case, it will ignore #1 if it is a .aux or .pdf file etc and it exists! if it doesn't exist, it will appear in the list of dependents regardless)
  \@addtofilelist{#1}% if you want it to appear in \listfiles, not really necessary and latexmk doesn't use this
  \IfFileExists{#1}{}{\typeout{No file #1.}}% latexmk will find this message if #1 doesn't exist (yet)
}
\begin{document}

\maketitle

\begin{abstract}
The analysis step of the ensemble Kalman filter, called the ensemble Kalman update (EnKU), is widely used for approximating posterior distributions in inverse problems and data assimilation. The EnKU approximates the posterior distribution $\pi_{X\mid Y=y_\star}$ by pushing forward the joint distribution $(X,Y)\sim\pi$ through an affine map $L^{\mathrm{EnKU}}_{\pi,y_\star}(x,y)$ that depends only on the covariance structure of $\pi$ and the observation $y_\star$. While the EnKU yields the exact posterior for Gaussian $\pi$ in the mean-field, this property alone does not uniquely determine the EnKU. In fact, there  are infinitely many affine maps $L_{\pi, y_\star}$ that achieve such exact conditioning. 
In this paper, we offer a novel characterization of the EnKU among all such affine maps. 
We first exhaustively  characterize the set \(\mathcal{E}^{\mathrm{EnKU}}\) of joint distributions for which the EnKU yields exact conditioning, showing that it is much larger than the set of Gaussians. Next, we show that except for a small class of highly symmetric distributions within \(\mathcal{E}^{\mathrm{EnKU}}\), the EnKU is the \emph{unique} exact affine conditioning map. Further, we characterize the largest possible set of distributions \(\mathcal{F}\) for which a distribution-dependent, weakly observation-dependent, affine map exists, a class of transports that naturally includes the EnKU. We show that \(\mathcal{F}=\mathcal{E}^{\mathrm{EnKU}}\cup\mathcal{S}_{\mathrm{nl-dec}}\) with a small symmetry class \(\mathcal{S}_{\mathrm{nl-dec}}\), meaning that for affine conditioning beyond the Gaussian setting, the EnKU has an exact set that is essentially maximally large.
\end{abstract}

% REQUIRED
\begin{keywords}
 Ensemble Kalman filter; stochastic filtering; measure transport; Bayesian inverse problems; uncertainty quantification; mean-field limit; non-Gaussian setting; exact conditioning; data assimilation. 
\end{keywords}

% REQUIRED
\begin{AMS}
  65C35, 62F15, 93E11
\end{AMS}

\section{Introduction}
Conditioning a joint probability distribution on the realized value of one variable is a fundamental operation in inverse problems and stochastic filtering. Given a joint distribution $(X,Y)\sim \pi$ on $\mathbb{R}^n \times \mathbb{R}^m$ and an observation $y_\star \in \mathbb{R}^m$, the conditional distribution $\pi_{X\mid Y=y_\star}$ is defined via disintegration. 
At the population level, many conditioning algorithms in the setting of filtering and inverse problems approach this task by constructing a transport map $T_{\pi, y_\star}:\R^n \times \R^m \rightarrow \R^n$, depending on $\pi$ and $y_\star$, whose pushforward of the joint distribution approximately coincides with the conditional distribution   \cite{evensen2003ensemble, spantini2022coupling, calvello2025ensemble, bishop2001adaptive, tippett2003ensemble,   el2012bayesian}, i.e., 
\begin{equation}
\label{eq:transport_measures}
(T_{\pi, y_\star})_\sharp \pi \approx \pi_{X\mid Y=y_\star}.
\end{equation}
In high dimensions, using an affine map $L_{\pi, y_\star}$ for $T_{\pi,y_\star}$ is particularly attractive because it can be implemented using standard linear algebra operations such as matrix-vector products and low-rank updates that scale efficiently with dimension. The transport map most widely used in practice, particularly in data assimilation and inverse problems, is indeed affine; it is the update step of the ensemble Kalman filter (EnKF), denoted by $T_{\pi, y_\star} = L^{\mathrm{EnKU}}_{\pi, y_\star}$, which is well-defined under the assumption that $\pi$ has finite second moments. This map takes the form
\begin{equation}
\label{eq:enkf_def}
  L^{\mathrm{EnKU}}_{\pi, y_\star}(x,y)
  = x + K(y_\star - y),
  \qquad 
  K = \Sigma_{XY}\Sigma_{YY}^\dagger,
\end{equation}
where $\Sigma_{XY}$ denotes the cross-covariance between $X$ and $Y$, $\Sigma_{YY}$ is the covariance of $Y$, and $^\dagger$ denotes the Moore--Penrose pseudoinverse \cite{evensen2003ensemble, evensen2009ensemble}. In the  data assimilation literature, the matrix $K$ is often   referred to as the \emph{Kalman gain}. We will call the transport map $L^{\mathrm{EnKU}}_{\pi, y_\star}$  the \textit{ensemble Kalman update} (EnKU). 

\subsection{Ambiguity of the ensemble Kalman update}
A special property of the EnKU is that when $X$ and $Y$ are jointly Gaussian, this update is \emph{exact}. That is, the approximation in \eqref{eq:transport_measures} becomes an equality; i.e., 
\[
\left(L^{\mathrm{EnKU}}_{\pi, y_\star}\right)_\sharp \pi = \pi_{X\mid Y=y_\star},
\]
holds $\pi_Y$-a.s.\ for $y_\star \in \R^m$ \cite{evensen2003ensemble, evensen2009ensemble, reich2015probabilistic, calvello2025ensemble}.  However, this property does not single out the EnKU among affine maps.  Indeed, as we will explain more later, there are infinitely many affine maps $L_{\pi, y_\star}: \R^{n}\times\R^m \rightarrow \R^n$ achieving $(L_{\pi, y_\star})_\sharp\pi =\pi_{X\mid Y=y_\star}$ for Gaussian $\pi$. Why, then, the particular choice of $K=\Sigma_{XY}\Sigma_{YY}^{\dagger}$, and in what sense is the EnKU  preferable? As it turns out, answering this question requires an analysis in the \textit{non-Gaussian} regime. In this paper, we consider the conditioning of non-Gaussian distributions to provide a new characterization of the ensemble Kalman update, uniquely identifying it among affine maps through the exactness of its pushforward distributions.

Before proceeding, we emphasize an important distinction from the existing literature. Prior work shows that the Kalman update is variance-minimizing among linear unbiased \textit{point estimators} $c + BY$ of $X$; i.e., the choice $B = K$, $c = \E(X) - K \E(Y)$ is optimal in this class. It is the so-called best linear unbiased estimator (BLUE) \cite{gelb1974applied, calvello2025ensemble}.  This notion of optimality is fundamentally different from our characterization, which is based on \emph{distributional exactness} and, in particular, considers the EnKU, which outputs an ensemble instead of a point estimate.
\subsection{Summary of main results}
In order to formalize our uniqueness claim, note that the EnKU takes a pair \(({\pi}, y_\star)\) and returns an affine map \(L^{\mathrm{EnKU}}_{{\pi}, y_\star}\). As such, it belongs to a broader class of transports that we term \emph{affine conditioning maps}. Below and throughout the paper, $\Pp_2(\mathbb{R}^d)$ denotes the space of Borel probability distributions on $\mathbb{R}^d$ with finite second moment. 
\begin{definition}
\label{def:affine_cond_map}
An \emph{affine conditioning map} is a mapping
\[
L:\ \mathcal{P}_2(\mathbb{R}^n\times\mathbb{R}^m)\times\mathbb{R}^m 
\longrightarrow \{\text{affine maps }\mathbb{R}^n\times\mathbb{R}^m\rightarrow\mathbb{R}^n\},\qquad
(\pi,y_\star)\longmapsto L_{\pi,y_\star},
\]
such that each $L_{\pi,y_\star}$ admits the affine representation
\[
L_{\pi,y_\star}(x,y)
= A(\pi,y_\star)x + B(\pi,y_\star)y + c(\pi,y_\star),
\]
where $A(\pi,y_\star)\in\mathbb{R}^{n\times n}$, $B(\pi,y_\star)\in\mathbb{R}^{n\times m}$, and $c(\pi,y_\star)\in\mathbb{R}^{n}$.
\end{definition}
It is clear that the EnKU map $L^{\mathrm{EnKU}}: (\pi,y_\star)\longmapsto L^{\mathrm{EnKU}}_{\pi, y_\star}$  defined in Equation \eqref{eq:enkf_def}  is an affine conditioning map. 
Note, however, that $A$, $B$, and $c$ as in Definition \ref{def:affine_cond_map} are much more general and allowed to depend on all of $\pi$, particularly on all of its moments.  We will often write $L_{ \pi, y}$ for $L$ to make this dependence explicit. 

Next, to distinguish the EnKU among affine maps, we define what it means to condition exactly. 
\begin{definition} 
\label{def:exact_conditioning}
Let $\pi \in \Pp_2(\R^n \times \R^m)$, $y_\star \in \R^m$, and fix a version of the Markov kernel $y \mapsto \pi_{X\mid Y=y}$. We say that an affine map  $\ell(x, y) := Ax + By + c$ with fixed $A\in \R^{n\times n}, B \in \R^{n \times m}, c \in \R^n$ \emph{conditions exactly} at $y_\star$ for $\pi$ if
$$
\ell_\sharp \pi = \pi_{X|Y=y_\star}. 
$$
We say that an affine conditioning map $L$  {conditions exactly at $y_\star$ for $\pi$}  if $\ell := L_{\pi,y_\star}$ conditions exactly at $y_\star$ for $\pi$.  
Further, if  $\pi_Y$-a.s.\ in $y_\star$ it holds that $L$ conditions exactly at $y_\star$ for $\pi$,    then we say that $L$ is an {exact conditioning map for $\pi$}. This is abbreviated by ``$L$ is exact for $\pi$'' or simply ``$L$ is exact'' if $\pi$ is clear from the context. 
\end{definition}
Crucially, note that exact affine conditioning at $y_\star$ for $\pi$ requires a choice of the Markov kernel $\pi_{X|Y=y}$, and we will only invoke this definition if such a choice was made beforehand.  Exactness of $L_{\pi,y}$  for $\pi$, on the other hand, is independent of the choice of Markov kernel $\pi_{X|Y=y}$. 

A simple covariance calculation shows that $L^\mathrm{EnKU}_{\pi,y_\star}$ is indeed an exact conditioning map for any Gaussian $\pi$. However, there are infinitely many other affine  conditioning maps $L_{\pi, y_\star}(x,y)=A(\pi,y_\star)x + B(\pi, y_\star)y + c(\pi, y_\star)$ that are also exact for Gaussians. For example, for a fixed Gaussian $\pi$ and $y_\star \in \R^m$ and for every choice of $F \in \R^{n \times m}$ (assuming $\Cov(X+FY)$ has full rank), there are choices $D\in\R^{n\times n}$  and $c\in \R^n$ such that $\ell(x,y)= D(x + Fy) + c$ is exact. This is a simple consequence of the fact that $X + FY$ is Gaussian and there is an affine transport map between any two non-singular Gaussians. (See also \cite{calvello2025ensemble} for a complete characterization of these affine maps.) 
{The natural question, then, is why the EnKU should be preferred among all such affine maps that achieve exact conditioning. As we will show, this choice can be justified by studying exact conditioning in the \emph{non-Gaussian} regime.}  To understand what distinguishes the EnKU among affine conditioning maps (and what does not), we study the \emph{exact set} of the EnKU, which we define for a general affine conditioning map $L$ as 
\begin{equation}
\label{eq:exact_set}
\mathcal{E}(L) := \left\{\pi \in \Pp_2\left(\R^n\times\R^m\right) \, \left | \, \,   L \text{ is an exact affine  conditioning map for }\pi \right. \right\}.
\end{equation}
We answer the following two questions in this paper, with formal results in Section \ref{section:enkf_set_analysis}:
\begin{enumerate}
    \item  What is the set of distributions $\pi \in \Pp_2(\R^n\times\R^m)$ for which the EnKU update $L^{\mathrm{EnKU}}$ is an exact conditioning map, i.e., what is the exact set $\mathcal{E}^\mathrm{EnKU}:= \mathcal{E}(L^\mathrm{EnKU})$?
    \item Given a distribution $\pi\in \mathcal{E}^\mathrm{EnKU}$  for which the EnKU is exact and an observation $y_\star\in\R^m$, is the EnKU update $L^{\mathrm{EnKU}}_{\pi, y_\star}$ the \textit{only} affine map  achieving exact affine conditioning?
\end{enumerate}
The first question is answered in Proposition \ref{prop:exact_set_kalman}. Importantly, $\mathcal{E}^{\mathrm{EnKU}}$ is substantially larger than the class of Gaussian distributions. 
In Theorem \ref{thm:enkf_update_unique}, we answer the second question: excluding certain strongly symmetric distributions, given $\pi \in \mathcal{E}^\mathrm{EnKU}$ and $y_\star \in \R^m$, the EnKU update $L_{\pi, y_\star}^{\mathrm{EnKU}}$ is the \emph{only} affine map  that is exact for $\pi$ at $y_\star$. 

Conversely, when choosing an affine conditioning map $L$ from the infinitely many possibilities, to reduce bias we may want an $L$ for which the set $\mathcal{E}(L)$ is maximally large.  In Section \ref{section:beyond_enkf}, to study this maximally large set, we  define \emph{weakly observation-dependent affine conditioning maps}  as affine conditioning maps $L$ of the form 
$$
L_{\pi,y_\star}(x,y)=A(\pi)x + B(\pi)y + c(\pi, y_\star),
$$
generalizing commonly used affine conditioning maps like the EnKU or square root updates \cite{evensen2003ensemble,evensen2009ensemble, tippett2003ensemble, nerger2012unification}. 
We investigate the size of the largest possible exact set  that any  weakly observation-dependent $L$ might have, which turns out to take the form
$$
\mathcal{F}:=\bigcup\limits_{L \text{ weakly observation-dep.}}\mathcal{E}(L).
$$
In Theorem \ref{thm:F_char} we  show that the EnKU is  exact on all of $\mathcal{F}$ except for pathological counterexamples, thereby almost achieving the maximal level of exactness any weakly observation-dependent affine conditioning map can have. More formally, we show that there is a small symmetry class $\mathcal{S}_{\mathrm{nl-dec}}\subseteq \mathcal{F}$ such that 
$$
\mathcal{F} = \mathcal{E}^\mathrm{EnKU}\cup\mathcal{S}_{\mathrm{nl-dec}}.
$$This negative result concerning the potential enlargement of the EnKU exactness class through weakly observation-dependent affine transports is consistent with recent developments in learned ensemble filters \cite{mccabe2021learning, revach2022kalmannet, bach2025learning}, where the Kalman gain (corresponding to the matrix $B$) was explicitly made observation-dependent to enable conditioning on more general distributions. In particular, those learned maps were not merely weakly observation-dependent, allowing them to go beyond $\mathcal{F}$ (see also Remark \ref{remark:learned_ensemble_filters}).

\subsection{A comment on ensemble approximations}
In practical implementations, quantities depending on the population distribution $\pi$ are typically approximated through the empirical distribution
\[
\widehat{\pi}^N \:= \frac{1}{N}\sum_{i=1}^N \delta_{(X_i,Y_i)},
\qquad (X_i,Y_i)\sim \pi,
\]
associated with an ensemble of $N$ particles. 
This leads to an empirical approximation of the conditional distribution,
\[
\widehat{\pi}^N_{X\mid Y=y_\star} \:= \left(L^{\mathrm{EnKU}}_{\widehat{\pi}^N, y_\star}\right)_\sharp \widehat{\pi}^N.
\]
In this paper, however, we carry out our analysis in the \emph{mean-field} setting, operating at the level of population distributions $\pi$ and the corresponding mean-field conditional distribution
\[
\tilde{\pi}_{X \mid Y = y_\star} := \left(L^{\mathrm{EnKU}}_{\pi, y_\star}\right)_\sharp \pi.
\]
This perspective is standard in the theoretical analysis of ensemble-based methods and provides a principled baseline for understanding their behavior \cite{reich2015probabilistic, calvello2025ensemble}. To justify this reduction mathematically, let $d$ denote a probability metric on $\Pp_2(\R^n)$. Then the discrepancy between the empirical posterior and the true conditional distribution admits the decomposition
\[
d\left(\widehat{\pi}^N_{X\mid Y=y_\star}, \pi_{X\mid Y=y_\star}\right)
\le
\underbrace{d\left(\widehat{\pi}^N_{X\mid Y=y_\star}, \tilde \pi_{X\mid Y=y_\star}\right)}_{\text{variance}}
+
\underbrace{d\left(\tilde \pi_{X\mid Y=y_\star}, \pi_{X\mid Y=y_\star}\right)}_{\text{bias}}.
\]
The first ``variance'' term captures sampling error arising from a finite ensemble, while the second term reflects intrinsic bias of the population-level transport. To control the first term, classical concentration \cite{kallenberg1997foundations, hall2014martingale, feller1971introduction, le2009large} and techniques from propagation of chaos \cite{chaintron2022propagation, sznitman2006topics} can be used to show that, given sufficient regularity, the variance is $\mathcal O(N^{-\alpha})$
for some parameter $\alpha> 0$ and a particular choice of metric $d$; see, e.g., \cite{le2009large, al2024non} for examples of such bounds. 
This decomposition shows that in regimes where the ensemble size $N$ is large, the \emph{bias} term dominates the error, making it natural to ask under what conditions this bias vanishes exactly---which is precisely the question addressed in this paper. We will also demonstrate this effect in our numerical experiments (Section \ref{section:num_exp}). 
Since our analysis is carried out in the mean-field setting, important practical issues related to finite sample size---such as localization, covariance inflation, and direct analysis of small-ensemble behavior---are outside the scope of this work. Another important limitation of our analysis is that it is \emph{single-step} in nature, focusing on one step of (likelihood-free) Bayesian inference rather than multi-step filtering dynamics; as a result, we do not address issues such as long-term stability or error accumulation over time. Both categories of issues have been extensively studied elsewhere \cite{reich2015probabilistic, anderson2007adaptive, houtekamer2001sequential, law2015data, tong2015nonlinear, del2018stability, van2009uniform}.
\subsection{Notation}
 For $d\in\mathbb{N}$, we always consider $\R^d$ with inner product $\langle\cdot,\cdot\rangle$ and Euclidean norm $\|\cdot\|_2$. $I_d$, or simply $I$, is the $d\times d$ identity matrix. For a matrix $A$, $A^\top$ is the transpose, $A^\dagger$ the Moore--Penrose pseudoinverse, and $\sqrt{A}$ and $A^{\frac{1}{2}}$ both denote the {principal} symmetric square root when $A\succeq 0$. For an endomorphism/square  matrix $A$, we refer to the spectrum as $\sigma(A)$. $\rho(A)$ is the spectral radius of such a matrix. $\mathrm{GL}(n)$ is the general linear group.  The Frobenius norm is written as $\|\cdot\|_{\mathrm F}$. \(R_\theta\in\mathbb{R}^{2\times 2}\) denotes the two-dimensional rotation matrix \(R_\theta=\begin{pmatrix}\cos\theta&-\sin\theta\\ \sin\theta&\cos\theta\end{pmatrix}\), which rotates vectors in \(\mathbb{R}^2\) counterclockwise by angle \(\theta\).  
For a random vector $X$, its law/distribution is $\Law(X)\in \Pp_2(\R^d)$, expectation $\E(X)$, covariance $\Cov(X)=\E \left ((X-\E X)(X-\E X)^\top \right )$, and centered version $\overline X:=X-\E X$.  For a joint distribution $\pi\in \Pp_2(\R^n\times\R^m)$,  $\pi_X \in \Pp_2(\R^n)$ and $\pi_Y \in \Pp_2(\R^m)$ denote the marginals on the first $n$ and last $m$ coordinates.  $\pi_{X\mid Y=y}$ is a (fixed) version of the conditional distribution (a Markov kernel).  Letting $(X, Y) \sim \pi$, define the cross-covariance $\Sigma_{XY}(\pi) := \mathrm{Cov}(X,Y)$ and the auto-covariance $\Sigma_{YY}(\pi) := \mathrm{Cov}(Y)$, both under $\pi$. We omit the dependence on $\pi$ when it is clear from context. We say that $X_1\stackrel{d}{=}X_2$ for random vectors $X_1, X_2$ if they have the same distribution.  The Dirac probability measure at $x$ is $\delta_x$. $T_\sharp \mu$ denotes the pushforward of a distribution $\mu$ by  a measurable map $T$. $W_2$ is the $2$-Wasserstein distance on $\Pp_2(\R^d)$. $\mathcal S^{\mathrm{c}}$ is the complement in $\mathcal A$ of a subset $\mathcal S\subseteq \mathcal A$.

\section{Characterizing the EnKF update}
\label{section:enkf_set_analysis}
As described in the introduction, it is well known that the EnKU, defined in \eqref{eq:enkf_def}, is exact for Gaussian distributions \cite{reich2015probabilistic, kalman1960new}. In this section, we will go beyond Gaussian distributions by (i) identifying the set of distributions $\mathcal{E}^\mathrm{EnKU} =\mathcal{E}(  L^{\mathrm{EnKU}})$ (recall \eqref{eq:exact_set}) on which the EnKU is exact; and (ii) understanding the structure of transport maps that are exact for some element of $\mathcal{E}^\mathrm{EnKU}$.

The EnKU works by taking every $x$-sample and correcting it linearly with its corresponding increment $K(y_\star -y)$. This reveals the underlying structure of the EnKU: rather than operating on a Gaussian assumption per se, it assumes that there is a linear relationship between $X$ and $Y$. Informally, if we can approximately expand 
$$
X \approx Z + MY + \mathcal{O}(Y^2)
$$
for $Z$ independent of $Y$, a matrix $M \in \R^{n\times m}$, and $\mathcal{O}(Y^2)$ small, then the EnKU will yield accurate results. The following proposition formalizes this idea, completely characterizing all distributions in $\mathcal{E}^\mathrm{EnKU}$. 
\begin{proposition}
\label{prop:exact_set_kalman}
Let $\mathrm{Lin}$ be  the class of linear maps from $\R^n\times \R^m$ to $\R^n$. Then the following equation fully characterizes the exact set of the EnKU:
\begin{align}
\mathcal{E}^\mathrm{EnKU}= \Bigl\{ \pi \in \Pp_2\left(\R^n\times\R^m\right) |&  \exists \pi_{X|Y}, \nu \in \Pp_2\left(\R^n\right), O \in \mathrm{Lin}  \label{eq:exact_set_kalman}   \\
& \text{ s.t. } \pi_{X|Y=y} = O(\cdot,y)_\sharp \nu  \  \forall y\in\R^m \Bigl\}.\notag
\end{align}
\end{proposition}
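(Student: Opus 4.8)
The plan is to prove the set equality by two inclusions, with the whole argument hinging on one elementary simplification of the EnKU pushforward. Writing $W := X - KY$ for the residual of a sample $(X,Y)\sim\pi$ under the gain $K = \Sigma_{XY}\Sigma_{YY}^\dagger$, the map acts as $L^{\mathrm{EnKU}}_{\pi,y_\star}(X,Y) = (X - KY) + Ky_\star = W + Ky_\star$, so that $(L^{\mathrm{EnKU}}_{\pi,y_\star})_\sharp\pi = \Law(W + Ky_\star)$. Thus exactness of the EnKU at $y_\star$ is equivalent to the single identity $\pi_{X\mid Y=y_\star} = \Law(W + Ky_\star)$, and the proposition reduces to comparing this translated residual law with the conditional kernel.

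For the forward inclusion (exactness forces the stated structure on $\pi$), I would assume the EnKU is exact, so that $\pi_{X\mid Y=y_\star} = \Law(W + Ky_\star)$ for $\pi_Y$-a.e.\ $y_\star$. I would then \emph{select} the version of the Markov kernel given by $\pi_{X\mid Y=y} := \Law(W + Ky)$ for every $y\in\R^m$: this map is weakly continuous in $y$, hence a genuine Markov kernel, and since it coincides $\pi_Y$-a.e.\ with the assumed conditional it is a legitimate version. Taking $\nu := \Law(W)$ and the linear map $O(z,y) := z + Ky$ (i.e.\ $O_1 = I$, $O_2 = K$) then yields $\pi_{X\mid Y=y} = O(\cdot,y)_\sharp\nu$ for \emph{all} $y$, exactly the required structure with the existential choice of kernel made explicit.

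For the reverse inclusion I would start from a $\pi$ of the stated form and realize it as $X = O_1 Z + O_2 Y$ with $Z\sim\nu$ independent of $Y$. Independence gives $\Sigma_{XY} = O_2\Sigma_{YY}$, whence $K = O_2\Sigma_{YY}\Sigma_{YY}^\dagger = O_2 P$, where $P$ is the orthogonal projection onto $\Ran(\Sigma_{YY})$. Substituting into the pushforward formula gives $L^{\mathrm{EnKU}}_{\pi,y_\star}(X,Y) = O_1 Z + O_2(I-P)Y + O_2 P y_\star$, and I would then need to show this has law $\Law(O_1 Z + O_2 y_\star) = \pi_{X\mid Y=y_\star}$.

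I expect the pseudoinverse bookkeeping in this last step to be the only real obstacle: when $\Sigma_{YY}$ is singular one has $P\neq I$ and $K = O_2 P \neq O_2$, so the naive cancellation of the $O_2 Y$ terms fails. The resolution is the support constraint that a centered random vector lies in the range of its covariance almost surely, giving $(I-P)(Y-\E Y)=0$ a.s.\ and $(I-P)(y_\star-\E Y)=0$ for $\pi_Y$-a.e.\ $y_\star$; feeding both identities into the expression collapses it to $O_1 Z + O_2 y_\star$ a.s., which closes the reverse inclusion for $\pi_Y$-a.e.\ $y_\star$. Combining the two inclusions — and noting that the $O_1 = I$ witness produced in the first is a special case of the general linear $O$ allowed on the right — gives the claimed characterization.
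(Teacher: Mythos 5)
Your proof is correct and follows essentially the same route as the paper's: the forward inclusion takes the residual $W = X-KY$ with $\nu=\Law(W)$ and $O(z,y)=z+Ky$ as the witness, and the reverse inclusion uses independence to get $\Sigma_{XY}=O_2\Sigma_{YY}$, hence $K=O_2P$, and then cancels the projection terms via the fact that $Y$ lies a.s.\ in the affine subspace $\E Y + \Ran(\Sigma_{YY})$. The only cosmetic difference is that the paper handles the cancellation by introducing an independent copy $\tilde Y\sim\pi_Y$ and using $Y-\tilde Y\in\Ran(\Cov(\pi_Y))$ a.s., whereas you center at $\E Y$ and argue pointwise for $\pi_Y$-a.e.\ $y_\star$; these are interchangeable forms of the same support argument.
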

A proof can be found in the appendix. 
\begin{remark}
Note that the inclusion $\supseteq$ in the set equality above  is closely related to \cite[Remark 4]{spantini2022coupling}, where exact and generally nonlinear transport-based conditioning is characterized from an independence viewpoint. Specifically, the authors remark that if a map $S\in\mathrm{Lin}$ is invertible in the $x$-component, and satisfies $S(X,Y)\indep  Y$ for $(X,Y)\sim\pi$, then the map $\ell := S(\cdot,y_\star)^{-1}\circ S$ implements exact conditioning, i.e., $\ell_\sharp\pi=\pi_{X\mid Y=y}$. The independent-rendering choice $S(x,y)=x-\Sigma_{XY}\Sigma_{YY}^\dagger y$ recovers precisely the EnKU update.  Motivated by this perspective, in settings where nonlinear structure is present---specifically, when there exist nonlinear features $\phi$ such that $X \stackrel{d}{=} \phi(Z,Y)$ for $Z \indep Y$---natural extensions of the EnKU have been proposed and studied, including the conditional mean filter (corresponding to $\phi(Z,Y)=Z+f(Y)$) \cite{lei2011moment, hoang2021machine} and the stochastic map filter (where $\phi$ has triangular structure \cite{spantini2022coupling}). %TODO citation: OUR_MEAN_FIELD_PAPER
\end{remark}
The question we answer in the remainder of this section is whether there can be \textit{other} affine transports
$$
L_{\pi,y_\star} (x,y) = A(\pi,y_\star)x +  B(\pi, y_\star)y  + c(\pi, y_\star),
$$
besides the EnKU, that achieve exactness for $\pi \in \mathcal{E}^\mathrm{EnKU}$. 
To gain some intuition, we go back to the set of Gaussian $\pi$, which is clearly contained in $\mathcal{E}^\mathrm{EnKU}$. As we explained in the introduction, there are many other affine maps implementing exact conditioning for Gaussian joint distributions $\pi$. The fundamental reason for this degree of freedom in the choice of $L$ is that Gaussian distributions have strong symmetries. The distribution of a Gaussian vector  is a \emph{stable distribution}, meaning that the sum of two independent Gaussians is again Gaussian \cite{samorodnitsky1994stable, zolotarev1986one, nolan1998multivariate}.\footnote{Gaussians are the only stable random variables with finite second moment \cite{feller1971introduction}.}  As a consequence, Gaussians are \emph{self-decomposable}: specifically, letting $G$ denote a Gaussian random vector in $\R^d$, $G$ is \emph{$\lambda$-decomposable} for every $\lambda\in (0,1)$ \cite{loeve1945nouvelles, ken1999levy, rajba2001multiple}, meaning that there exists another independent Gaussian vector $G_\lambda$ such that
$$
G \stackrel{d}{=}\lambda G + G_\lambda.
$$
Another strong symmetry possessed by non-degenerate Gaussian vectors $G$ is a rescale-then-rotate symmetry:  there is a matrix $C \in \R^{d\times d}$ (e.g., the inverse of any square root of the covariance matrix) such that the distribution of $C\overline G$ is invariant under any rotation. In the following theory, we will demonstrate that it is due to these symmetries that there are many possible choices of exact affine conditioning maps for Gaussians. A third symmetry leading to many possible choices of conditioning maps is when $Z\sim\nu$ corresponding to some $\pi \in \mathcal{E}^\mathrm{EnKU}$  has constant components, meaning that $v^\top Z$ is a.s.\ constant for some $v \neq 0$ (equivalently, that $Z$ has a singular covariance matrix). 

Generalizing these three symmetries (namely, $\lambda$-de\-com\-pos\-abi\-li\-ty of the joint distribution,  the rescale-then-rotate symmetry, and singular covariance matrices) to non-Gaussian joint distributions leads to the final EnKU characterization result presented in Theorem \ref{thm:enkf_update_unique}.  
\begin{definition}
We define the sets $\mathcal{S}_{\mathrm{cov}}, \mathcal{S}_{\mathrm{dec}}, \mathcal{S}_{\mathrm{cyc}} \subseteq \mathcal{E}^\mathrm{EnKU}$.  Consider any $\pi \in \mathcal{E}^\mathrm{EnKU}$, meaning that there exists $\nu \in \Pp_2(\R^n)$  and a linear map $M$ from $\R^m$ to $\R^n$ such that for $Y \sim \pi_Y$ and $Z \sim \nu$ with $Y$ and $Z$ independent, $(Z + MY,  Y) \sim \pi$. Then $\pi \in \mathcal{S}_{\mathrm{cov}}$ if and only if $\nu$ has singular covariance.  $\pi \in \mathcal{S}_{\mathrm{dec}}$ if and only if there exist complex vectors $v\in\C^n \setminus \{0\}$,  $w\in\C^m$, and a constant $\lambda \in \mathbb{C}$, $|\lambda| < 1$ such that 
$$
v^\top \overline Z \stackrel{d}{=} \sum\limits_{k = 0}^\infty \lambda^k w^\top  \overline Y^{(k)} 
$$
for i.i.d.\ copies  $\overline Y^{(k)}$ of $\overline Y$. $\pi \in \mathcal{S}_{\mathrm{cyc}}$ if and only if there exist real vectors $v_1, v_2 \in \R^n \setminus 
\{0\}$ such that $Z_{\mathrm{cyc}} =(v_1^\top Z, v_2^\top Z)^\top$ satisfies cyclic symmetry of some order $k \in \N_{\geq 2}$, meaning that 
$$
\overline Z_{\mathrm{cyc}} \stackrel{d}{=} R_{2\pi/k} \overline  Z_{\mathrm{cyc}},
$$
where $R_\theta$ is the two-dimensional rotation by angle $\theta$.  
\end{definition}
We make two remarks about this definition.
\begin{remark}
    Note that $\mathcal{S}_{\mathrm{cov}}\subset \mathcal{S}_{\mathrm{dec}}$ by choosing $v \in \mathrm{Ker}(\Cov(Z)) \setminus \{0\}$ and $w= 0$.
\end{remark}
\begin{remark}
\label{rem:gaussians_connection}
    We clarify how Gaussian distributions relate to the three symmetry classes $\mathcal{S}_{\mathrm{cov}}$, $\mathcal{S}_{\mathrm{dec}}$, and $\mathcal{S}_{\mathrm{cyc}}$. Let $(X,Y)\sim \pi=\mathcal N(\mu,\Sigma)$ be jointly Gaussian and write $(X,Y)=(Z+MY,Y)$ with $M=\Sigma_{XY}\Sigma_{YY}^\dagger$. Then the covariance of $Z$ is the Schur complement $\Cov(Z)=\Sigma_{XX}-\Sigma_{XY}\Sigma_{YY}^{\dagger}\Sigma_{YX},$ which coincides with the posterior covariance. In particular, $\pi\in\mathcal{S}_{\mathrm{cov}}$ if and only if the posterior distribution is singular.  Next, consider the class $\mathcal{S}_{\mathrm{dec}}$. If $\Cov(Z)$ is singular, then $\pi\in\mathcal{S}_{\mathrm{dec}}$ (see the previous remark). If $\Cov(Z)$ is non-singular, then $\pi\in\mathcal{S}_{\mathrm{dec}}$ if and only if $Y$ is non-constant, by additive closedness of Gaussians. Finally, $\pi\in\mathcal{S}_{\mathrm{cyc}}$ always holds for Gaussian distributions since after centering, Gaussians are invariant under sign flips, corresponding to cyclic symmetry of order $k=2$.
\end{remark}
Within $\mathcal{E}^{\mathrm{EnKU}}$, each of the symmetry classes above carves out a highly non-generic and {``small''} subset of distributions. {Smallness here can be quantified by the number   of constraints defining the set.} If $\pi\in \mathcal{S}_{\mathrm{cov}}$, then $\nu$ has singular covariance, so $Z$ lives almost surely in a proper affine subspace of $\R^n$. Let $\pi\in \mathcal{S}_{\mathrm{dec}}$. Then one linear functional of $\overline Z$ is a geometrically weighted infinite linear combination of a \emph{single} functional of $\overline Y$ {in distribution, imposing infinitely many scalar constraints.} In particular, this identity forces $\lambda$-decomposability of $v^\top\overline{Z}$, which is a special non-generic property \cite{loeve1945nouvelles, ken1999levy, rajba2001multiple}. A simple way of seeing that $\lambda$-decomposability for a random variable $U$ with characteristic function $\phi_U$ is easily violated  is to note that the defining equation $\phi_U(t) = \phi_U(\lambda t)\phi_{U_\lambda}(t)$ is unsatisfiable for  many characteristic functions with zeros (e.g., uniform distribution, atoms, etc.). Further, $\pi \in \mathcal{S}_{\mathrm{dec}}$ forces the decomposition variable to be a projection $w^\top \overline{Y}$, imposing a strong self-similar convolution equation on the joint distribution. If $\pi\in \mathcal{S}_{\mathrm{cyc}}$, there are $v_1,v_2\neq 0$ so that the two-dimensional projection $Z_{\mathrm{cyc}}=(v_1^\top Z,v_2^\top Z)$ is invariant under the finite rotation group $\{R_{2\pi m/k}\}_{m=0}^{k-1}$, imposing strong symmetry constraints. 
\begin{remark}
\label{rem:small_symmetry}
Alternatively, smallness of the sets $\mathcal{S}_{\mathrm{cov}}, \mathcal{S}_{\mathrm{dec}}, \mathcal{S}_{\mathrm{cyc}}$ can be understood topologically when ignoring the case of singular observation covariance.  Define
\[
\mathcal E^{\mathrm{EnKU}}_0
:=
\bigl\{
\pi \in \mathcal E^{\mathrm{EnKU}}\big|\mathrm{Cov}(\pi_Y)\text{ is invertible}\bigr\},
\]
and endow it with the relative \(W_2\)-topology. Then, $\mathcal S_{\mathrm{dec}} \cap \mathcal E^{\mathrm{EnKU}}_0$ and $\mathcal S_{\mathrm{cyc}} \cap \mathcal E^{\mathrm{EnKU}}_0$ are meagre subsets of \(\mathcal E^{\mathrm{EnKU}}_0\). In particular, $\mathcal S_{\mathrm{cov}}$ is a meagre subset of \(\mathcal E^{\mathrm{EnKU}}_0\). A formal version of this statement appears in the supplement \ref{sec:small_symmetry_topological}. 
\end{remark}
Although the EnKU is exact on each of these symmetry classes, the proof of the following theorem reveals that there are many other affine conditioning maps that are also exact on $\mathcal{S}_{\mathrm{cov}}$, $\mathcal{S}_{\mathrm{dec}}$, or $\mathcal{S}_{\mathrm{cyc}}$. Yet the theorem also shows that as soon as our distribution violates one of these symmetries, the space of possible affine filters contracts sharply. Before presenting the theorem, we uniquely fix the choice of Markov kernel for a given $\pi \in\mathcal{E}^\mathrm{EnKU}$: let $K = \Sigma_{XY}\Sigma^\dagger_{YY}$ where $\Sigma$ is the covariance matrix of $\pi$ and define $Z = X-KY$. Whenever we write down the Markov kernel $\pi_{X|Y=y_\star}$, we refer to the choice with distribution given by $Z + Ky_\star$.  The ``$\subseteq$'' part in the proof of Proposition \ref{prop:exact_set_kalman} demonstrates that this is indeed a valid Markov kernel for $\pi$. 
\begin{figure}[htbp]
    \centering
    \includegraphics[width=\columnwidth]{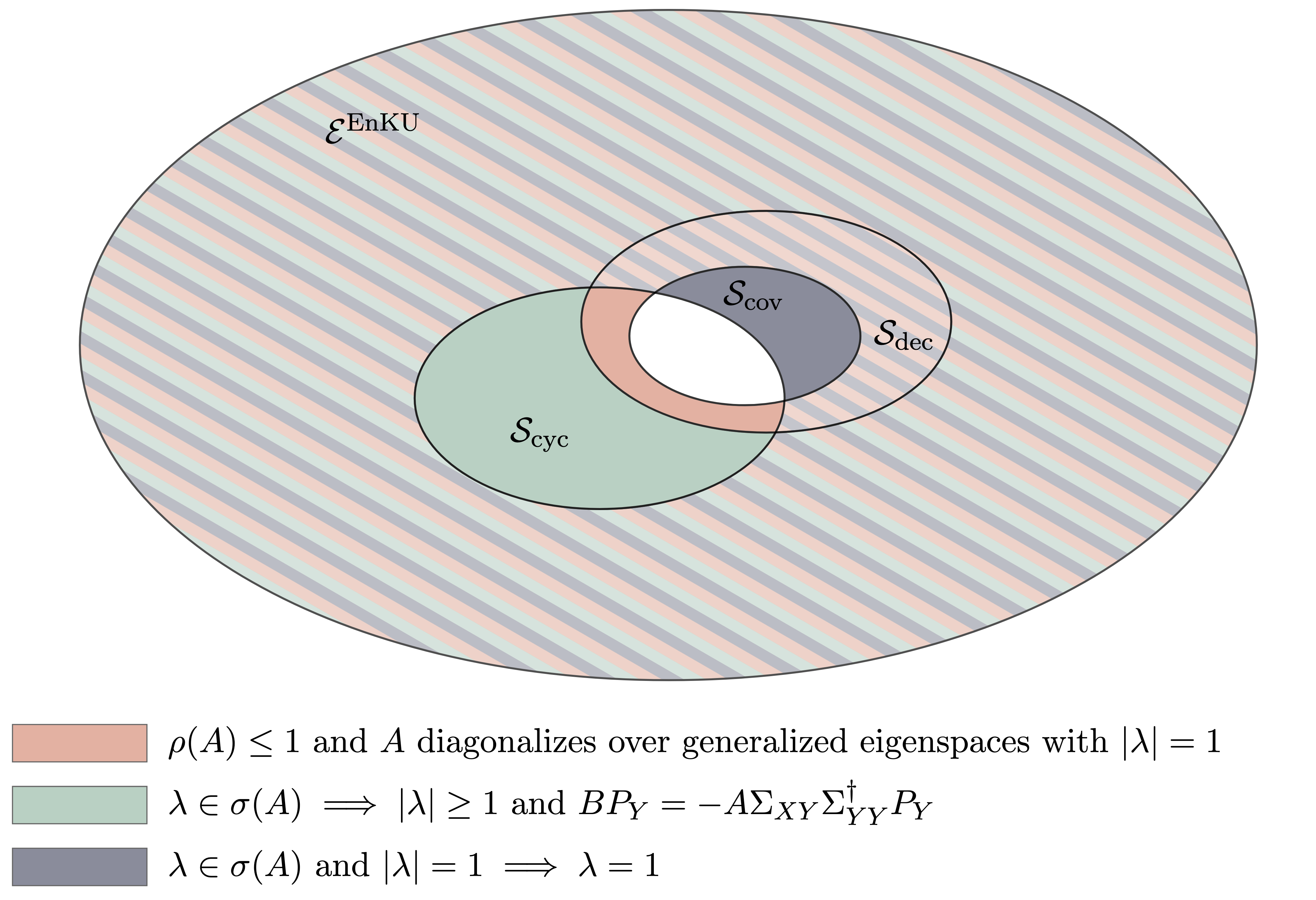}
    \caption{Theorem \ref{thm:enkf_update_unique} shows for any given $\pi\in \mathcal{E}^\mathrm{EnKU}$ that for any symmetry  $\mathcal{S}_{\mathrm{cov}}, \mathcal{S}_{\mathrm{dec}}, \mathcal{S}_{\mathrm{cyc}}$ it violates, strong structural constraints are imposed on any affine conditioning map $Ax + By +c$.  By Corollary \ref{cor:enkf_char}, if it violates all these symmetries, $Ax + By +c$ must be the EnKU. This corresponds to the region outside $\mathcal{S}_{\mathrm{cov}}$, $\mathcal{S}_{\mathrm{dec}}$, and $\mathcal{S}_{\mathrm{cyc}}$ in the diagram.}
    \label{fig:thm_enkf_char}
\end{figure}
\begin{theorem}
\label{thm:enkf_update_unique}
Consider $\pi \in \mathcal{E}^\mathrm{EnKU}$. Pick some $y_\star \in \R^m$ and assume that $\ell(x,y) = Ax + By +c$ conditions exactly for $\pi$ at $y_\star$. 
\begin{enumerate}
    \item  If $\pi \notin \mathcal{S}_{\mathrm{cov}}$, then $\rho(A) \leq 1$, and {$A$ is diagonalizable on the direct sum of generalized eigenspaces corresponding to eigenvalues $\lambda \in \C$ with $|\lambda|=1$.} 
    \item    If $\pi \not \in \mathcal{S}_{\mathrm{dec}}$, then   $A$ has no complex eigenvalues with magnitude smaller than $1$ and $$BP_Y = -A\Sigma_{XY}\Sigma_{YY}^\dagger P_Y,$$ where $P_Y  = \Sigma_{YY}\Sigma_{YY}^\dagger$ is the orthogonal projector onto the column space of $\Sigma_{YY} :=\mathrm{Cov}(Y)$.  
\item If $\pi \not \in \mathcal{S}_{\mathrm{cyc}}$, then $A$ has no complex eigenvalues with $|\lambda| = 1$ and  $\lambda \neq 1$. 
\end{enumerate}
\end{theorem}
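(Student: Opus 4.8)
The plan is to reduce all three claims to a single distributional identity and then read off the spectral structure of $A$ from it. Writing $\pi\in\mathrm{E}^{\mathrm{EnKU}}$ as $X = Z + KY$ with $Z := X - KY$ (the fixed Markov-kernel decomposition), a short computation using $K = \Sigma_{XY}\Sigma_{YY}^\dagger$ shows that $Z$ is independent of $Y$ and that its centered law coincides with that of $\nu$; in particular $\pi\in\mathrm{S}_{\mathrm{cov}}$ iff $\Cov(Z)$ is singular. Substituting $X = Z+KY$ into the exactness hypothesis $AX+BY+c \stackrel{d}{=} Z + Ky_\star$ and centering, the constant terms fix $c$ and the random part collapses to the master identity
\[
A\overline Z + N\overline Y \stackrel{d}{=}\overline Z, \qquad N := AK + B,
\]
with $\overline Z\perp\overline Y$. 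Matching covariances gives the discrete Lyapunov relation $A\,\Cov(Z)\,A^\top + N\,\Sigma_{YY}\,N^\top = \Cov(Z)$, and since both summands are positive semidefinite and sum to $\Cov(Z)$, the subspace $V:=\Ran \Cov(Z)$ is $A$-invariant — a fact I will need for the singular case.

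For part (i) I assume $\pi\notin\mathrm{S}_{\mathrm{cov}}$, so $\Cov(Z)\succ0$. Conjugating by $\Cov(Z)^{1/2}$ turns the Lyapunov relation into $\tilde A\tilde A^\top \preceq I$ for $\tilde A := \Cov(Z)^{-1/2}A\,\Cov(Z)^{1/2}$, i.e. $\tilde A$ is a contraction similar to $A$. Hence $\rho(A)=\rho(\tilde A)\le 1$, and the standard fact that a contraction cannot carry a nontrivial Jordan block on the unit circle (its powers stay bounded, whereas such a block grows polynomially) shows every magnitude-one eigenvalue is semisimple, which is exactly the asserted diagonalizability on the corresponding generalized eigenspaces.

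Parts (ii) and (iii) I would prove by projecting the master identity onto eigenvectors of $A^\top$. If $A^\top v = \lambda v$ with $v\in\C^n\setminus\{0\}$, applying the fixed $\C$-linear functional $v^\top$ to both sides yields the scalar perpetuity
\[
\lambda\,(v^\top\overline Z) + (N^\top v)^\top\overline Y \stackrel{d}{=} v^\top\overline Z,
\]
whose two left-hand terms are independent. When $|\lambda|<1$, iterating this fixed point and passing to the limit (the remainder $\lambda^k\,v^\top\overline Z$ vanishes in $L^2$ and $\sum_k\lambda^k(N^\top v)^\top\overline Y^{(k)}$ converges) gives $v^\top\overline Z \stackrel{d}{=}\sum_{k\ge0}\lambda^k(N^\top v)^\top\overline Y^{(k)}$, placing $\pi$ in $\mathrm{S}_{\mathrm{dec}}$; contraposing yields the spectral half of (ii). For (iii), when $|\lambda|=1$ a second-moment balance in the perpetuity forces $\E|(N^\top v)^\top\overline Y|^2=0$, hence $(N^\top v)^\top\overline Y=0$ a.s. and $v^\top\overline Z\stackrel{d}{=}\lambda\,v^\top\overline Z$; iterating the rotation $\lambda=e^{i\theta}$ (using density of $\{e^{ik\theta}\}$ when $\theta/2\pi$ is irrational, the finite cyclic group otherwise, and taking $v_1=v_2$ in the real case $\lambda=-1$) identifies the real and imaginary parts of $v$ as the vectors $v_1,v_2$ witnessing $\pi\in\mathrm{S}_{\mathrm{cyc}}$, so contraposition gives (iii). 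It then remains to obtain $N P_Y = 0$, i.e. $B P_Y = -A\Sigma_{XY}\Sigma_{YY}^\dagger P_Y$, in part (ii): restricting the Lyapunov relation to the invariant subspace $V$, where $\Cov(Z)$ is nondegenerate, makes $\tilde A|_V$ a contraction whose spectrum — by the spectral half of (ii) — lies on the unit circle, so it has unit determinant and hence unit singular values and is orthogonal, forcing equality in the Lyapunov relation and therefore $N\Sigma_{YY}N^\top=0$, i.e. $N P_Y=0$.

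The hard part will be the bookkeeping around complex eigenvalues and degeneracy rather than any single deep estimate: one must check that the complex-valued functional $v^\top\overline Z$ is correctly identified with the two-dimensional real projection appearing in $\mathrm{S}_{\mathrm{cyc}}$ (and that $v_1,v_2\neq0$, which is automatic except in the real case $\lambda=-1$, handled separately), and one must run the $N P_Y=0$ argument on the invariant subspace $V$ so that it survives a singular $\Cov(Z)$, since part (ii) must hold \emph{without} assuming $\pi\notin\mathrm{S}_{\mathrm{cov}}$. The two supporting linear-algebra lemmas — \emph{a contraction has no Jordan block on the unit circle} and \emph{a contraction with unimodular spectrum is orthogonal} — are routine but should be stated explicitly, as parts (i) and (ii) both hinge on them.
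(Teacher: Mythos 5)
Your proposal is correct, but it reaches the three claims by a genuinely different route than the paper. The paper first proves a standalone classification theorem for the fixed-point equation $X \stackrel{d}{=} AX + U$ with $X\perp U$ (Theorem~\ref{thm:main_result}), built on the real Jordan decomposition and an inductive Stein-equation lemma for Jordan blocks (Lemma~\ref{lem:stein-noncontractive}), and then reads off all three parts from that classification. You bypass the Jordan machinery: for part (1) you match covariances to get the Lyapunov identity $A\Cov(Z)A^\top + N\Sigma_{YY}N^\top = \Cov(Z)$, conjugate by $\Cov(Z)^{1/2}$ to exhibit $A$ as similar to an operator-norm contraction, and use boundedness of powers to rule out nontrivial unit-circle Jordan blocks; for the gain identity in part (2), where the paper concludes from Theorem~\ref{thm:main_result} that $U$ is a.s.\ constant, you instead restrict the Lyapunov identity to the $A$-invariant subspace $V=\Ran\Cov(Z)$ (your invariance argument via $\Ran\bigl(A\Cov(Z)A^\top\bigr)=A\,\Ran\Cov(Z)\subseteq\Ran\Cov(Z)$ is valid) and apply the determinant argument that a contraction with unimodular spectrum is unitary, forcing $N\Sigma_{YY}N^\top=0$ and hence $NP_Y=0$ even when $\Cov(Z)$ is singular; for part (3) you kill the noise term by a second-moment balance in the scalar perpetuity, where the paper again invokes its Stein lemma, after which both proofs share the same rotation-density endgame (Lemmas~\ref{lem:dense_in_unit_int} and~\ref{lem:angle_rot}) including the $v_1=v_2$ trick for $\lambda=-1$. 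The spectral half of part (2)---the perpetuity $v^\top\overline Z \stackrel{d}{=} \sum_{k\ge 0}\lambda^k (N^\top v)^\top \overline Y^{(k)}$ obtained from a left eigenvector---is essentially identical in both proofs. What the paper's route buys is reusability: Theorem~\ref{thm:main_result} is also the engine behind Theorem~\ref{thm:F_char}. What your route buys is a shorter, self-contained, and more elementary argument for this particular theorem. One presentational correction: the independence of $Z=X-KY$ from $Y$ is \emph{not} a consequence of a computation with $K=\Sigma_{XY}\Sigma_{YY}^\dagger$ (that only yields uncorrelatedness); it is exactly what membership $\pi\in\mathrm{E}^{\mathrm{EnKU}}$ supplies through Proposition~\ref{prop:exact_set_kalman}, and your writeup should cite that rather than call it a computation.
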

A proof is included in the appendix. Note that the third point, $\pi \not \in \mathcal{S}_{\mathrm{cyc}}$, can \emph{never} occur for Gaussian distributions, while the second, $\pi \not \in \mathcal{S}_{\mathrm{dec}}$, arises only in pathological edge cases (see Remark~\ref{rem:gaussians_connection}), underscoring that our characterization fundamentally relies on non-Gaussian structure. The following corollary, also proved in the appendix, explains that if a distribution violates all three of these symmetries, the only possible exact affine update is the EnKU. To rule out spurious  offsets in the constant \(c\), we assume \(\Sigma_{YY}\) is invertible. This is natural: singular directions of \(Y\) carry no information and can be projected out a priori.
\begin{corollary}
 \label{cor:enkf_char}Consider $\pi \in \mathcal{E}^\mathrm{EnKU}$ with non-singular covariance $\Sigma_{YY}$.  Pick some $y_\star \in \R^m$ and assume that $\ell(x,y) = Ax + By +c$ is an exact affine transport for $\pi$ at $y_\star$. If $\pi \not \in \mathcal{S}_{\mathrm{cov}}$, $\pi \not \in \mathcal{S}_{\mathrm{dec}}$, and $\pi \not \in \mathcal{S}_{\mathrm{cyc}}$, then $\ell$ is the EnKU: 
$$
\ell(x,y) =  L_{\pi,y_\star}^{\mathrm{EnKU}}(x,y).
$$
\end{corollary}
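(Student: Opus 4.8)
The plan is to assemble the three conclusions of Theorem~\ref{thm:enkf_update_unique}, all of which hold simultaneously under the corollary's hypotheses, and to combine them with the structural representation of laws in $\mathrm{E}^{\mathrm{EnKU}}$. First I would unpack the exactness condition. By Proposition~\ref{prop:exact_set_kalman} and the fixed choice of Markov kernel, write $X = Z + KY$ with $Z\sim\nu$ independent of $Y$ and $K=\Sigma_{XY}\Sigma_{YY}^{\dagger}$; matching cross-covariances gives $M=K$, and since $\Sigma_{YY}$ is invertible, $K=\Sigma_{XY}\Sigma_{YY}^{-1}$ and $\pi_{X\mid Y=y_\star}=\Law(Z+Ky_\star)$. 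Substituting $X=Z+KY$ into $\ell_\sharp\pi=\pi_{X\mid Y=y_\star}$ and using $Z\perp Y$ yields the distributional identity
\[
A Z + (AK+B)\,Y + c \;\stackrel{d}{=}\; Z + K y_\star .
\]

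Next I would eliminate the $Y$-dependence. Since $\pi\notin\mathrm{S}_{\mathrm{dec}}$, the second conclusion of Theorem~\ref{thm:enkf_update_unique} gives $BP_Y=-A\Sigma_{XY}\Sigma_{YY}^{\dagger}P_Y$; with $\Sigma_{YY}$ invertible we have $P_Y=I$, hence $B=-AK$ and $AK+B=0$. The identity then collapses to $AZ+c\stackrel{d}{=}Z+Ky_\star$. The central step is to force $A=I$ purely from spectral information. Because $\pi\notin\mathrm{S}_{\mathrm{cov}}$, the first conclusion gives $\rho(A)\le 1$ together with diagonalizability of $A$ on the generalized eigenspaces of the unit-modulus eigenvalues; because $\pi\notin\mathrm{S}_{\mathrm{dec}}$, the second conclusion excludes every eigenvalue with $|\lambda|<1$; and because $\pi\notin\mathrm{S}_{\mathrm{cyc}}$, the third excludes every unit-circle eigenvalue other than $1$. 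Reading these over $\mathbb{C}$, no eigenvalue has $|\lambda|<1$, none satisfies $|\lambda|=1$ with $\lambda\neq 1$, and all obey $|\lambda|\le 1$, so $\sigma(A)=\{1\}$. The whole space is then the generalized eigenspace of $1$, on which $A$ is diagonalizable, so $A$ is diagonalizable with every eigenvalue equal to $1$, i.e. $A=I$.

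Finally, with $A=I$ the reduced identity reads $Z+c\stackrel{d}{=}Z+Ky_\star$; matching first moments (valid since $\pi\in\Pp_2$) gives $c=Ky_\star$. Therefore $\ell(x,y)=x-Ky+Ky_\star=x+K(y_\star-y)=L^{\mathrm{EnKU}}_{\pi,y_\star}(x,y)$, which is the claim.

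The hard part will be confirming that the three spectral exclusions \emph{jointly exhaust} the spectrum --- in particular that real eigenvalues in $(-1,1)$ and the eigenvalue $-1$ are genuinely covered (so that "complex eigenvalue" in Theorem~\ref{thm:enkf_update_unique} must be read as "eigenvalue in $\mathbb{C}$") --- and that it is precisely the diagonalizability clause that upgrades $\sigma(A)=\{1\}$ to the full equality $A=I$, rather than to a merely unipotent matrix. Both the elimination of the $Y$-term and the determination of $c$ hinge essentially on the invertibility of $\Sigma_{YY}$, which is exactly what removes the constant-offset ambiguity flagged before the statement.
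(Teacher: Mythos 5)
Your proof is correct and follows essentially the same route as the paper's: assemble the three conclusions of Theorem~\ref{thm:enkf_update_unique} to force $\sigma(A)=\{1\}$ together with diagonalizability (hence $A=I$), use the $\mathrm{S}_{\mathrm{dec}}$ conclusion with $P_Y=I$ (from invertibility of $\Sigma_{YY}$) to get $B=-K$, and take expectations in the resulting identity $Z+c\stackrel{d}{=}Z+Ky_\star$ to conclude $c=Ky_\star$. The minor differences---eliminating the $Y$-term before pinning down $A$ rather than after, and your explicit worry about real eigenvalues in $(-1,1)$ and $\lambda=-1$---are immaterial, since the paper indeed reads ``complex eigenvalue'' as ``eigenvalue over $\mathbb{C}$'' and its theorem covers exactly those cases.
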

This is a unique characterization result of the EnKU. As the set of symmetry-free distributions is the largest part of $\mathcal{E}^\mathrm{EnKU}$, \Cref{cor:enkf_char} supports defaulting to the EnKU to avoid bias within $\mathcal{E}^\mathrm{EnKU}$. Moreover, even if some of these symmetries hold, one would still need to identify them in order to construct an exact conditioning map---a requirement that seems inefficient in practical and sample-constrained settings.

\section{Beyond the ensemble Kalman update}
\label{section:beyond_enkf}
The previous section established that, apart from a small symmetry class $\mathcal{S}_{\mathrm{cov}} \cup \mathcal{S}_{\mathrm{dec}}  \cup \mathcal{S}_{\mathrm{cyc}} $, the ensemble Kalman update (EnKU) is the unique affine conditioning map that is exact for any element  $\pi \in\mathcal{E}^\mathrm{EnKU}$ and observation $y_\star\in\R^m$. This observation raises a natural concern: perhaps the restriction to   $\mathcal{E}^\mathrm{EnKU}$ is too limiting. If one were to consider  different affine conditioning maps $L_{\pi,y_\star}$, could the associated exact set $\mathcal{E}(L_{\pi,y_\star})$ be strictly larger than $\mathcal{E}^\mathrm{EnKU}$? In other words, is it possible to design an update rule that is exact for a much broader class of distributions, thereby outperforming the EnKU in terms of bias reduction?  
\subsection{Maximal exactness of weakly observation-dependent affine conditioning maps}
To investigate this possibility, we extend our analysis to the family of \emph{weakly observation-dependent affine conditioning maps}  
taking the form
\begin{equation}
\label{eq:weakly_y_star}
L_{\pi,y_\star}(x,y) = A(\pi)x + B(\pi)y + c(\pi,y_\star).
\end{equation}
Our restriction to this class is motivated by practice: these maps are general enough to cover most update rules practically used in high-dimensional ensemble-based data assimilation  \cite{evensen2003ensemble,evensen2009ensemble, tippett2003ensemble, nerger2012unification}. In particular, they encompass commonly used deterministic alternatives such as square root updates. Therefore, weakly observation-dependent affine conditioning maps provide a natural framework in which to ask whether moving beyond the EnKU can substantially enlarge the domain of exactness. Define
\[
\mathcal{F} := \bigcup_{L \text{ weakly observation-dep.}}\mathcal{E}(L).
\]
The set $\mathcal{F}$ is the maximal exact set achievable by any single weakly observation-dependent affine update.  We give a simple necessary characterization criterion for elements of $\mathcal{F}$. 
\begin{proposition}
\label{prop:char_weak_set}
Let $\pi \in\mathcal{F}$. Then there exists a Markov kernel $\pi_{X|Y=y}$, a measurable $d:\R^m\rightarrow \R^n$, and $ \nu\in\Pp_2\left(\R^n\right)$ such that 
$$
\pi_{X|Y=y_\star} = T_{d(y_\star)}\nu\text{ for all }y_\star \in \R^m
$$
where we define the translation operator on distributions $T_{h}: \Pp_2\left(\R^n\right) \rightarrow \Pp_2\left(\R^n\right)$ through $T_h \mu := (x \mapsto x+h)_\sharp \mu$ for every $h \in \R^n$. Also, $d$ is $\pi_Y$-a.s.\ unique up to jointly translating $\nu$ and $d$. 
\end{proposition}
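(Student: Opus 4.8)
The plan is to exploit the one feature that distinguishes a \emph{weakly} $y_\star$-dependent map: since $A(\pi)$ and $B(\pi)$ carry no dependence on $y_\star$, the linear part of $L_{\pi,y_\star}$ is frozen and only the offset $c(\pi,y_\star)$ moves with the observation. Concretely, fix a weakly $y_\star$-dependent $L$ witnessing $\pi\in\mathrm{F}$, write $A:=A(\pi)$, $B:=B(\pi)$, and set $\nu:=\Law(AX+BY)$ for $(X,Y)\sim\pi$. Because $A,B$ are fixed matrices and $\pi\in\Pp_2$, we have $\nu\in\Pp_2(\R^n)$, and crucially $\nu$ does not depend on $y_\star$. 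The whole content of the proposition is then the elementary identity
$$(L_{\pi,y_\star})_\sharp\pi=\Law\bigl(AX+BY+c(\pi,y_\star)\bigr)=T_{c(\pi,y_\star)}\,\nu,$$
so exactness of $L$ for $\pi$ immediately gives $\pi_{X|Y=y_\star}=T_{c(\pi,y_\star)}\nu$ for $\pi_Y$-a.s.\ $y_\star$. That is, weak $y_\star$-dependence collapses the entire family of posteriors to translates of a single fixed measure $\nu$. Everything remaining is measure-theoretic bookkeeping.

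Next I would produce a translation field that is genuinely measurable and defined for \emph{all} $y_\star$, rather than relying on $c(\pi,\cdot)$, which is not a priori measurable. The trick is to recover the shift from first moments: set $d(y):=\E_\pi[X\mid Y=y]-\int z\,\dd\nu(z)$, where the conditional expectation denotes a measurable version defined on all of $\R^m$, so that $d:\R^m\to\R^n$ is measurable everywhere. Comparing means in the displayed identity shows $c(\pi,y_\star)=d(y_\star)$ for $\pi_Y$-a.s.\ $y_\star$. To upgrade from "$\pi_Y$-a.s." to "for all $y_\star$", I would simply \emph{select} the version of the Markov kernel given by $\pi_{X\mid Y=y}:=T_{d(y)}\nu$ for every $y\in\R^m$. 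Since $h\mapsto T_h\nu$ is continuous on $\Pp_2(\R^n)$ and $d$ is measurable, this is a legitimate measurable kernel, and by the previous step it agrees with the original conditional law $\pi_Y$-a.s., hence is a valid version of $\pi_{X\mid Y=y}$. With this choice the identity $\pi_{X|Y=y_\star}=T_{d(y_\star)}\nu$ holds for all $y_\star\in\R^m$, as claimed.

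For the uniqueness clause, suppose $(\nu,d)$ and $(\nu',d')$ both represent the same kernel, i.e.\ $T_{d(y)}\nu=T_{d'(y)}\nu'$ for $\pi_Y$-a.s.\ $y$. Comparing means yields $d(y)-d'(y)=\int z\,\dd\nu'(z)-\int z\,\dd\nu(z)=:b$, a constant independent of $y$, so $d'=d-b$, and substituting back forces $\nu'=T_{-b}\nu$; thus $d$ is $\pi_Y$-a.s.\ unique up to the additive constant that merely rebases $\nu$. I do not expect a deep obstacle here, since the mathematical content is entirely in the frozen-linear-part identity of the first paragraph. The only place demanding genuine care is the measure-theoretic step: guaranteeing that the translation field is measurable on all of $\R^m$ (which is why I recover it from conditional means rather than from the unstructured offset $c$) and legitimately replacing the Markov kernel on a $\pi_Y$-null set so that the conclusion holds pointwise in $y_\star$ rather than merely almost surely.
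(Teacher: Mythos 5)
Your proof is correct, but it reaches the conclusion by a genuinely different route than the paper at the one technically delicate step: producing a \emph{measurable} translation field $d$ defined on all of $\R^m$. The paper anchors $\nu:=\pi_{X|Y=y_0}$ at a reference observation $y_0$, sets $d(y)=c(\pi,y)-c(\pi,y_0)$ on the exactness set $Q$, and then must prove this is measurable with no a priori regularity on $c(\pi,\cdot)$; it does so by descriptive-set-theoretic means, invoking the Lusin--Souslin theorem to invert the continuous injection $h\mapsto T_h\nu$ on its image in $\Pp_2(\R^n)$, together with Borel measurability of $y\mapsto\pi_{X|Y=y}$ as a map into $\Pp_2(\R^n)$ (cited from Ambrosio--Gigli--Savar\'e). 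You instead recover the shift from first moments, $d(y)=\E_\pi[X\mid Y=y]-\int z\,\dd\nu(z)$, which is automatically a measurable function on all of $\R^m$; the identification $c(\pi,y_\star)=d(y_\star)$ $\pi_Y$-a.s.\ then follows because translation shifts the mean of $\nu$ by exactly the translation vector, and the disintegration mean agrees a.s.\ with the conditional expectation. This exploits the finite-moment structure already guaranteed by $\pi\in\Pp_2$ and entirely avoids the Lusin--Souslin machinery, so your argument is lighter; the paper's inversion argument is what one would need if no moment functional were available to linearize the translation. Your kernel-replacement step (redefining $\pi_{X|Y=y}:=T_{d(y)}\nu$ everywhere, legitimate because it agrees a.s.\ with the original version) and your mean-comparison proof of a.s.\ uniqueness of $d$ up to a constant are both sound, and the latter is actually more explicit than the paper's appeal to a.s.\ uniqueness of Markov kernels. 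One trivial slip: with your definition $b=\int z\,\dd\nu'(z)-\int z\,\dd\nu(z)$ and $d'=d-b$, substituting back gives $\nu'=T_{b}\nu$, not $T_{-b}\nu$; this has no bearing on the claimed uniqueness of $d$.
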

A proof can be found in the appendix. 
Before stating the main result of this section, we introduce the class $\mathcal{S}_{\mathrm{nl-dec}} \subseteq \mathcal{F}$, where nl-dec denotes ``nonlinearly decomposable.'' 
\begin{definition}
We define $\mathcal{S}_{\mathrm{nl-dec}}\subseteq \mathcal{F}$. Let $\pi \in \mathcal{F}$  and let $(\nu, d)$ be defined  as in Proposition \ref{prop:char_weak_set}. Set $Z \sim \nu$ and $Y \sim \pi_Y$ independently. Then we say  $\pi \in \mathcal{S}_{\mathrm{nl-dec}}$ if and only if  there exist complex vectors $v \in\C^n\setminus\{0\}$, $w \in \C^m,u\in\C^n$, and constants $\lambda \in \mathbb{C}$, $|\lambda| < 1$, $b \in \C$ such that 
\begin{equation}
\label{eq:S_nl_dec_rec}
v^\top \overline Z \stackrel{d}{=} \sum\limits_{k = 0}^\infty \lambda^k \bigl (w^\top   Y^{(k)} + u^\top d(Y^{(k)}) \bigr ) +  b
\end{equation}
for i.i.d.\ copies $\{Y^{(k)}\}_{k\geq 0}$ of $Y$. 
\end{definition}
This class is ``small'' in the same sense as our earlier symmetry classes; it is defined by  invariance identities (e.g., a generalized $\lambda$-decomposition tying a one-dimensional nonlinear feature of the $Y$-marginal to a linear functional of $Z$).
\begin{theorem}
\label{thm:F_char}
The set of all $\pi \in \Pp_2\left(\R^n\times \R^m\right)$ that have a weakly observation-dependent exact affine update decomposes as 
$$
\mathcal{F} = \mathcal{E}^\mathrm{EnKU} \cup \mathcal{S}_{\mathrm{nl-dec}}.
$$
\end{theorem}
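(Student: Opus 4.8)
The plan is to prove the two inclusions separately; only ``$\subseteq$'' requires work. The inclusion $\mathrm{E}^\mathrm{EnKU}\cup\mathrm{S}_{\mathrm{nl-dec}}\subseteq\mathrm{F}$ is immediate, since the EnKU is itself a weakly $y_\star$-dependent map that is exact on $\mathrm{E}^\mathrm{EnKU}$, while $\mathrm{S}_{\mathrm{nl-dec}}\subseteq\mathrm{F}$ holds by construction. For the reverse inclusion, I fix $\pi\in\mathrm{F}$ and let $\ell_{y_\star}(x,y)=Ax+By+c(y_\star)$ be a witnessing exact weakly $y_\star$-dependent map, so that $A,B$ are independent of $y_\star$. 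By Proposition~\ref{prop:char_weak_set} I may write $X=Z+d(Y)$ with $Z\sim\nu$ independent of $Y\sim\pi_Y$, where the pair $(\nu,d)$ is unique up to a joint additive constant; comparing with the characterization in Proposition~\ref{prop:exact_set_kalman} shows $\pi\in\mathrm{E}^\mathrm{EnKU}$ if and only if this essentially unique $d$ is affine. The whole argument is therefore a dichotomy: either $d$ is affine, giving $\pi\in\mathrm{E}^\mathrm{EnKU}$, or $d$ is genuinely nonlinear, in which case I must produce the $\mathrm{S}_{\mathrm{nl-dec}}$ identity.

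First I would convert exactness into a characteristic-function identity. Setting $W:=Ad(Y)+BY$, which lies in $L^2$ since $d(Y)=X-Z\in L^2$ and is independent of $Z$, exactness $(\ell_{y_\star})_\sharp\pi=\pi_{X\mid Y=y_\star}$ reads $AZ+W+c(y_\star)\stackrel{d}{=}Z+d(y_\star)$, i.e. $\phi_Z(A^\top t)\,\phi_W(t)\,e^{i\langle t,\,c(y_\star)\rangle}=\phi_Z(t)\,e^{i\langle t,\,d(y_\star)\rangle}$ for all $t$ and $\pi_Y$-a.e.\ $y_\star$. Comparing this identity at two observation values and using continuity at $t=0$ (where every characteristic function equals $1$ and is nonzero) forces $c(y_\star)=d(y_\star)+c_0$ for a constant $c_0$; substituting back cancels all $y_\star$-dependence and leaves the autonomous fixed-point equation $\phi_Z(t)=\phi_Z(A^\top t)\,\phi_W(t)\,e^{i\langle t,\,c_0\rangle}$, equivalently $Z\stackrel{d}{=}AZ'+W+c_0$ with $Z'\stackrel{d}{=}Z$ independent of $W$. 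This single self-similar identity drives everything, and the dichotomy is read off the spectrum of $A$. Matching covariances (translations do not change covariance, and $AZ'\perp W$) yields the Stein inequality $\Sigma_Z-A\Sigma_Z A^\top=\Cov(W)\succeq0$.

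Suppose first that $A$ has an eigenvalue $\lambda$ with $|\lambda|<1$ and left eigenvector $v\neq0$, so $v^\top A=\lambda v^\top$. Iterating the fixed-point identity telescopes: the residual factor after $N$ steps is $\phi_Z(\lambda^N s v)$, which tends to $1$ because $\lambda^N\to0$, yielding $v^\top Z\stackrel{d}{=}\sum_{k\geq0}\lambda^k\,v^\top W^{(k)}+\mathrm{const}$ for i.i.d.\ copies $W^{(k)}$, the series converging in $L^2$ precisely because $|\lambda|<1$. Since $v^\top W=\lambda\,v^\top d(Y)+(B^\top v)^\top Y$, this is exactly the defining identity of $\mathrm{S}_{\mathrm{nl-dec}}$ with $u=\lambda v$, $w=B^\top v$, and $b$ absorbing the constant and the centering of $Z$; hence $\pi\in\mathrm{S}_{\mathrm{nl-dec}}$.

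It remains to show that if $A$ has no eigenvalue of modulus $<1$ then $d$ is affine. On $V:=\Ran\Sigma_Z$, which the Stein inequality forces to be $A$-invariant and on which $\Sigma_Z$ is positive definite, conjugating by $\Sigma_Z^{1/2}$ turns the inequality into $\tilde A\tilde A^\top\preceq I$ for $\tilde A:=\Sigma_Z^{-1/2}A\Sigma_Z^{1/2}$, so $\|\tilde A\|_2\le1$ and $\rho(A|_V)\le1$; combined with the standing assumption this pins every eigenvalue on $V$ to the unit circle. The key linear-algebra lemma is that a contraction all of whose eigenvalues lie on the unit circle is unitary—unit-modulus eigenvalues of a contraction are semisimple and reducing, ruling out Jordan blocks—so that $A\Sigma_Z A^\top=\Sigma_Z$ on $V$ and, using $\Ran\Cov(W)\subseteq V$, in fact $\Cov(W)=0$ globally. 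Then $W=Ad(Y)+BY$ is a.s.\ constant, and since $A$ is invertible (no zero eigenvalue) we get $d(Y)=A^{-1}(\mathrm{const}-BY)$, i.e.\ $d$ affine and $\pi\in\mathrm{E}^\mathrm{EnKU}$. I expect the main obstacle to be exactly this neutral regime $|\lambda|=1$: establishing the unitary-block lemma and excluding Jordan structure, together with the bookkeeping required to run the telescoping argument for complex eigenvalues (via joint characteristic functions on real invariant $2$-planes) and to treat the singular-covariance case by restricting to $V$ while accounting for the a.s.\ constant directions of $Z$ off $V$.
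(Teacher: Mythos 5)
Your proof is correct, and while it shares the paper's overall skeleton, the key technical step is carried out by a genuinely different argument. Both proofs start from Proposition \ref{prop:char_weak_set}, reduce exactness to a fixed-point-in-law equation for $\overline Z$ with independent noise $W = Ad(Y)+BY$ (plus constants), and split on the spectrum of $A$: an eigenvalue with $|\lambda|<1$ yields the $\mathrm{S}_{\mathrm{nl-dec}}$ identity by projecting onto a left eigenvector and telescoping---your Case 1 matches the paper's use of part (a) of its auxiliary Theorem \ref{thm:main_result}, producing the same witness $u=\lambda v$, $w=B^\top v$---while the absence of such eigenvalues must force $W$ to be a.s.\ constant, hence $d$ affine, hence $\pi\in\mathrm{E}^{\mathrm{EnKU}}$. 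Where you diverge is in how ``no stable eigenvalue $\Rightarrow \Cov(W)=0$'' is proved. The paper routes this through its general Theorem \ref{thm:main_result}, whose proof passes to the complex Jordan form and performs a component-wise induction on the Stein equation (Lemma \ref{lem:stein-noncontractive}), treating the $|\lambda|=1$ and $|\lambda|>1$ blocks separately. You instead restrict to $V=\Ran\Cov(Z)$---correctly observing that the Stein inequality $\Cov(Z)\succeq A\Cov(Z)A^\top$ makes $V$ $A$-invariant and that $\Ran\Cov(W)\subseteq V$---conjugate by $\Cov(Z)^{1/2}$ to obtain a contraction, and invoke the determinant/singular-value fact that a contraction with purely unimodular spectrum is orthogonal; this kills $\Cov(W)$ in one stroke, and eigenvalues of modulus greater than $1$ need no separate discussion because the contraction bound expels them from $V$. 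Your preliminary observation that $c(y_\star)=d(y_\star)+c_0$ (via characteristic functions near $t=0$), which makes the fixed-point equation autonomous in $y_\star$, is also absent from the paper, which instead fixes $y_\star$ and absorbs it into the constant part of $U$. What your route buys is economy and self-containedness for this theorem: no Jordan decomposition and no blockwise Stein induction. What the paper's route buys is reuse: Theorem \ref{thm:main_result} is the same engine that drives Theorem \ref{thm:enkf_update_unique}, so the heavier machinery serves double duty. Two routine points you would need to write out in full: the degenerate case where the support of $\pi_Y$ is a single point (comparison of two observation values is then impossible, but the claim is trivial), and the null-set modification of the Markov kernel needed to pass from ``$d$ affine $\pi_Y$-a.s.'' to membership in $\mathrm{E}^{\mathrm{EnKU}}$ via Proposition \ref{prop:exact_set_kalman}---a point the paper's own proof also elides.
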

The theorem is proved in the appendix and shows that weak observation dependence leaves essentially no room to beat the EnKU: the maximal exact set collapses to $\mathcal{E}^{\mathrm{EnKU}}$ up to the small symmetry class $\mathcal{S}_{\mathrm{nl-dec}}$.  Practically, unless this special nonlinear decomposability can be exploited, any weakly observation-dependent affine rule cannot exceed the exactness domain of the EnKU; in this sense, the EnKU is \textit{almost} optimal among weakly observation-dependent affine conditioning maps. 
Nevertheless, $\mathcal{S}_{\mathrm{nl-dec}}\cap (\mathcal{E}^\mathrm{EnKU})^{\mathrm{c}}$, with ${}^\mathrm{c}$ denoting the complement in $\mathcal{F}$, is \textit{nonempty}: there exist carefully constructed distributions satisfying \eqref{eq:S_nl_dec_rec} for which exact weakly observation-dependent conditioning is possible while the EnKU is not exact. We give such an example in Subsection \ref{subsection:non_enku_example}.

\subsection{Observation-dependent gain}
The maximality result above hinges on the restriction that $A(\pi)$ and $B(\pi)$ are independent of $y_\star$. If we drop this and allow \emph{fully observation-dependent} affine maps $L_{\pi, y_\star}(x,y)=A(\pi, y_\star)x+B(\pi, y_\star)y+c(\pi, y_\star)$, 
the situation changes: one can engineer many non-Gaussian $\pi$ with exact affine transports that lie strictly beyond $\mathcal{E}^{\mathrm{EnKU}}$. We present the following example.  
\begin{example} Consider any distribution $\eta \in \Pp(\R)$ 
and measurable function $f:\R\rightarrow \R$. Define $\pi$  by pushing forward through $\phi: \R^2 \rightarrow \R^2, \phi(z,y) = (f(y)z,  y)$: 
$$
\qquad \pi_{XY} = \phi_\sharp (\eta\otimes \eta). 
$$
Clearly $\pi$ is not in $\mathcal{F}$ for general $f$ and has the exact affine conditioning map $L_{y_\star}(x,y) = f(y_\star)y$.
\end{example}
Another example is as follows.
\begin{example}
Consider the hypercube $C = [0,1]^{2}$ and any orthogonal $R \in O(2)$. Let 
$(X, Y) \sim \mathrm{Unif}\left(RC\right)$ be uniformly distributed. For any $y^\star$ in the support of $Y$ there are $a(y^\star)$, $b(y^\star)$ such that
$$
{X|Y=y^\star}  \sim \mathrm{Unif}([a(y^\star),b(y^\star)]).
$$
Thus, an exact affine conditioning map is, for example,
$$
L_{y^\star}(x, y) = (b(y^\star) - a(y^\star))e_1^\top R^\top(x, y)  + a(y^\star).
$$
\end{example}
\begin{remark}
\label{remark:learned_ensemble_filters}
This perspective aligns with recent ``learned ensemble filters'' \cite{mccabe2021learning, revach2022kalmannet, bach2025learning}, where the analysis maps are chosen as  $L_{\pi, y_\star}(x,y) =x + B(\pi, y_\star) y+ c(\pi, y_\star)$, with the gain terms  $B$ and $c$  parameterized by a neural network in an observation-dependent manner. A complementary practical example of choosing $B$ in an observation-dependent way to
enlarge the exactness domain---here to include Student-$t$ distributions---is given in
\cite{provost2023adaptive}. Our negative result in Theorem \ref{thm:F_char} for weakly observation-dependent maps helps understand why these methods pursue observation-dependent updates: without such dependence, there is essentially no headroom beyond EnKU, whereas allowing $B$ to depend on $y_\star$ (and of course on $\pi$)  
can potentially realize exact updates for broader constructions. An interesting direction is to understand the enlargement of the exactness class, relative to $\mathcal{F}$, when \(A\) and \(B\) are allowed to depend on \(y_{\star}\).
\end{remark}

\section{Numerical experiments}
\label{section:num_exp}
This section illustrates the two main points of our paper empirically.  First, in Subsection \ref{subsection:within_enku}, we demonstrate that the EnKU remains bias-free for
highly non-Gaussian and structurally complex joint distributions, whereas other
affine updates that are exact only at the level of first and second moments fail for
these examples. This behavior supports the uniqueness result of
Theorem \ref{thm:enkf_update_unique} (and Corollary~\ref{cor:enkf_char}) within the class $\mathcal{E}^{\mathrm{EnKU}}$. Second, in Subsection~\ref{subsection:non_enku_example}, we present a carefully constructed example from the
highly structured class $\mathcal{S}_{\mathrm{nl\text{-}dec}}$ appearing in Theorem~\ref{thm:F_char}, for which an exact weakly observation-dependent affine conditioning map exists while the EnKU is inexact. This example illustrates our maximality result by showing that failures of the EnKU can occur for such pathological elements of $\mathcal{F}$.

\subsection{Within $\mathcal{E}^{\mathrm{EnKU}}$} 
\label{subsection:within_enku}
We test our main claim in Theorem \ref{thm:enkf_update_unique} empirically: in the mean-field limit and within affine conditioning maps, the EnKU is the only method that remains exact beyond highly symmetric distributions (such as Gaussians) within $\mathcal{E}^{\mathrm{EnKU}}$. To also account for finite sample effects, we consider the likelihood-free Bayesian inference problem
\begin{equation}
\label{eq:likelihood_free_bayesian_inversion}
\text{given i.i.d.\ }\{(X_i,Y_i)\}_{i = 1}^{N}\sim \pi \text{ compute } \{Z_i\}_{i = 1}^N \text{ such that } \frac{1}{N}\sum\limits_{i =1}^N \delta_{Z_i }\approx\pi_{X|Y=y_\star}.
\end{equation}
For this task, we compare several affine conditioning maps $L$ to the EnKU, producing samples of the form
\begin{equation}
\label{eq:posterior_pred_affine}
Z_i = L_{\widehat{\pi}_N, y_\star}(X_i, Y_i),
\end{equation}
where $\widehat{\pi}_N$ denotes the empirical distribution of the joint samples $\{(X_i,Y_i)\}_{i=1}^N$. We study the resulting approximations as the ensemble size $N$ increases.  Specifically, we pick three joint distributions $\pi\in \mathcal{E}^{\mathrm{EnKU}}$ in dimension $n=m=2$, defining them through   a distribution $\nu\in \Pp_2\left(\R^n\right)$  and a linear map $O(z,y)= z+\begin{psmallmatrix}10 &100 \\ 0 & 1
\end{psmallmatrix}y$ just as in Equation \eqref{eq:exact_set_kalman}. Thus, $\pi$ is fully defined by the marginal choices for $Z\sim \nu$ and $Y\sim\pi_Y$ (listed below), while preserving the linear coupling that places each example in $\mathcal{E}^{\mathrm{EnKU}}$.  We test the following three instances of $(X,Y) \sim \pi$, intentionally constructed to be challenging and ill-conditioned in order to illustrate our theory.
\begin{itemize}
\item {Experiment 1 (Gaussian):} As a sanity check, we consider the standard linear-Gaussian problem that classically motivates ensemble filters, namely
\[
Z\sim\mathcal N(0,\Sigma_Z),\qquad Y\sim\mathcal N(0,\Sigma_Y).
\]
As mentioned in the introduction, infinitely many affine transports result in exact conditioning for Gaussians in the mean-field. We  use 
\begin{align*}
              \Sigma_Z = \begin{pmatrix} 10 & -2.5 \\ -2.5 & 1 \end{pmatrix},\quad \Sigma_Y = \begin{pmatrix} 1 & 1.5 \\ 1.5 & 19\end{pmatrix}.
            \end{align*}
\item {Experiment 2 (Gaussian mixtures):} This is an example of a distribution that is in the set $\mathcal{E}^\mathrm{EnKU}$ but strongly multimodal and non-Gaussian: 
\[
Z \sim \sum_{k=1}^{6} w^{(Z)}_k\mathcal N(\mu^{(Z)}_k,\Sigma^{(Z)}_k),\qquad
Y \sim \sum_{\ell=1}^{6} w^{(Y)}_\ell\mathcal N(\mu^{(Y)}_\ell,\Sigma^{(Y)}_\ell). 
\]
The parameters $w_\ell$, $\mu_\ell$, and $\Sigma_\ell$ are drawn independently  from 
\begin{align*}
w^{(Z)}, w^{(Y)} &\stackrel{\text{i.i.d.}}{\sim} \mathrm{Dir}\left(\mathbf{1}_{6}\right) \\
\mu^{(Z)}_k,  \mu^{(Y)}_k &\stackrel{\text{i.i.d.}}{\sim} \mathcal{N}(0, S)& k = 1,\ldots, 6  \\
\Sigma_k^{(Z)},  \Sigma_k^{(Y)} &\stackrel{\text{i.i.d.}}{\sim} \mathcal{C}  & k = 1,\ldots, 6 
\end{align*}
with $\mathrm{Dir}$ denoting the Dirichlet distribution, $\mathbf{1}_{6}$ the vector of $6$ ones, covariance matrix 
$$
S =\begin{pmatrix}
120 & 108\\
108 & 120
\end{pmatrix},
$$
and $\mathcal{C}$ defined as the law of the matrix $M$ in 
\begin{align*}
F &\in \R^{2\times 2}, (F)_{ij} \stackrel{\text{i.i.d.}}{\sim} \mathcal{N}(0,1),
\quad s \in \R^2, 
s_i \stackrel{\text{i.i.d.}}{\sim} \text{Unif}(0.2,1.5) \\
M &= F \mathrm{diag}(s)F ^\top + 10^{-6} I_2. 
\end{align*}
\item {Experiment 3 (elliptical ring density):} We consider another strongly non-Gaussian distribution that is in $\mathcal{E}^\mathrm{EnKU}$.  Consider $K = 3$ rings and $M=6$ angular modes. We spread out the radii $\ell_r, r = 1,..., K$ uniformly between $\ell_1 = 1.4$ and $\ell_K = 4.0$. Consider an independently uniformly distributed ring mode $r \sim \mathrm{Unif}(\{1, \ldots, K\})$ and angular mode $j \sim \mathrm{Unif}(\{1, \ldots, M\})$ with centers $\mu_j=\tfrac{2\pi j}{M}$. Conditioning on $(r,j)$, let
\[
\theta \mid j \sim \mathrm{vM}(\mu_j,\kappa), \kappa=25,\quad
\rho \mid r \sim \mathcal N(\ell_r,\sigma^2)
\]
with $\mathrm{vM}$ denoting the von Mises distribution and $\sigma = 0.2$. This defines the random variable $U$  with ring density through the polar parametrization
$$
U = \begin{pmatrix}\rho\cos\theta\\ \rho\sin\theta\end{pmatrix}.
$$
We break Euclidean $6$-fold rotational symmetry by defining
\[
Z := \begin{pmatrix}
2 & 0\\
1 & 1
\end{pmatrix} U.
\]
For $Y$, we consider a Gaussian mixture with $6$ components, sampled as in Experiment~2, with the modification that $
\mu^{(Y)}_k \stackrel{\text{i.i.d.}}{\sim} \mathcal{N}\left(0,
\begin{psmallmatrix}
150 & 0 \\
0 & 1
\end{psmallmatrix}\right), k = 1,\ldots,6.$
\end{itemize}

\begin{figure}[htbp]
  \begin{minipage}{0.49\textwidth}
    \includegraphics[width=\linewidth,clip,trim=0 0 0 0]{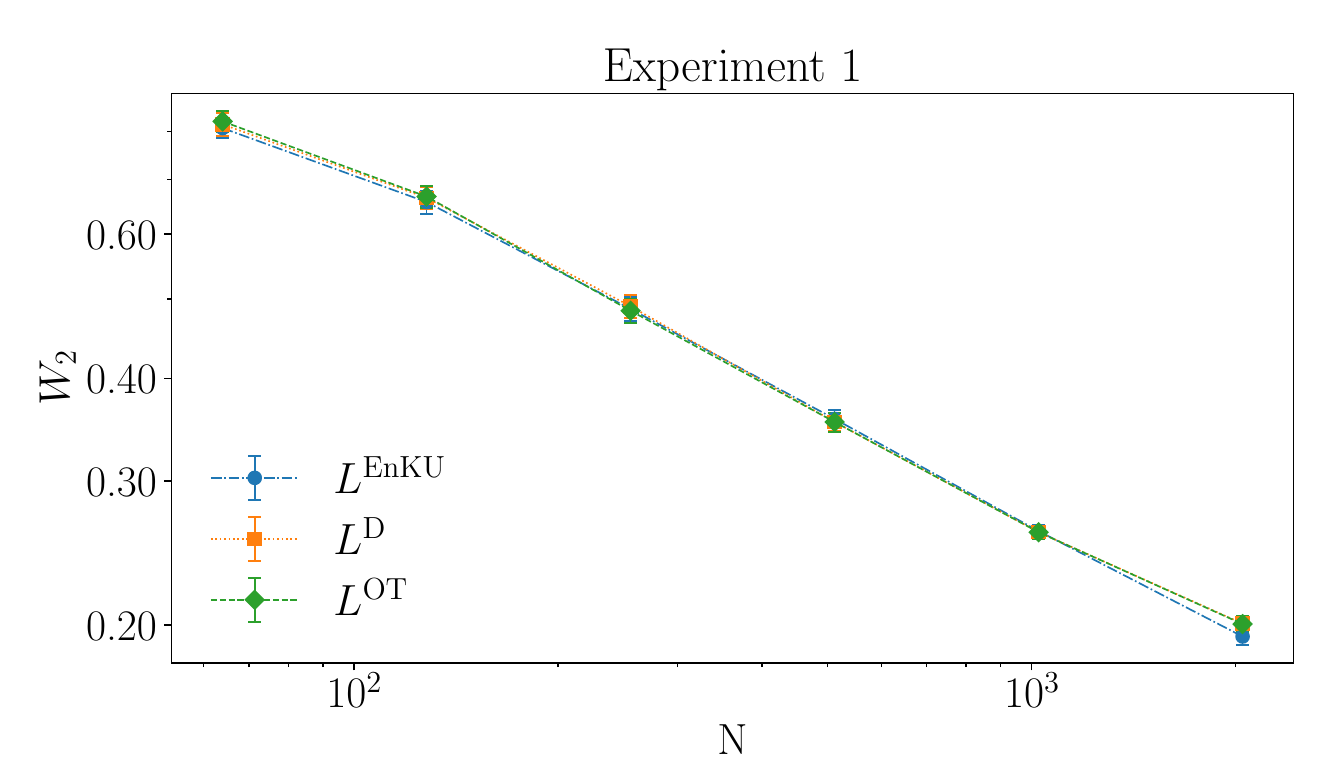}
  \end{minipage}\hfill
  \begin{minipage}{0.49\textwidth}
    \includegraphics[width=\linewidth,clip,trim=0 0 0 0]{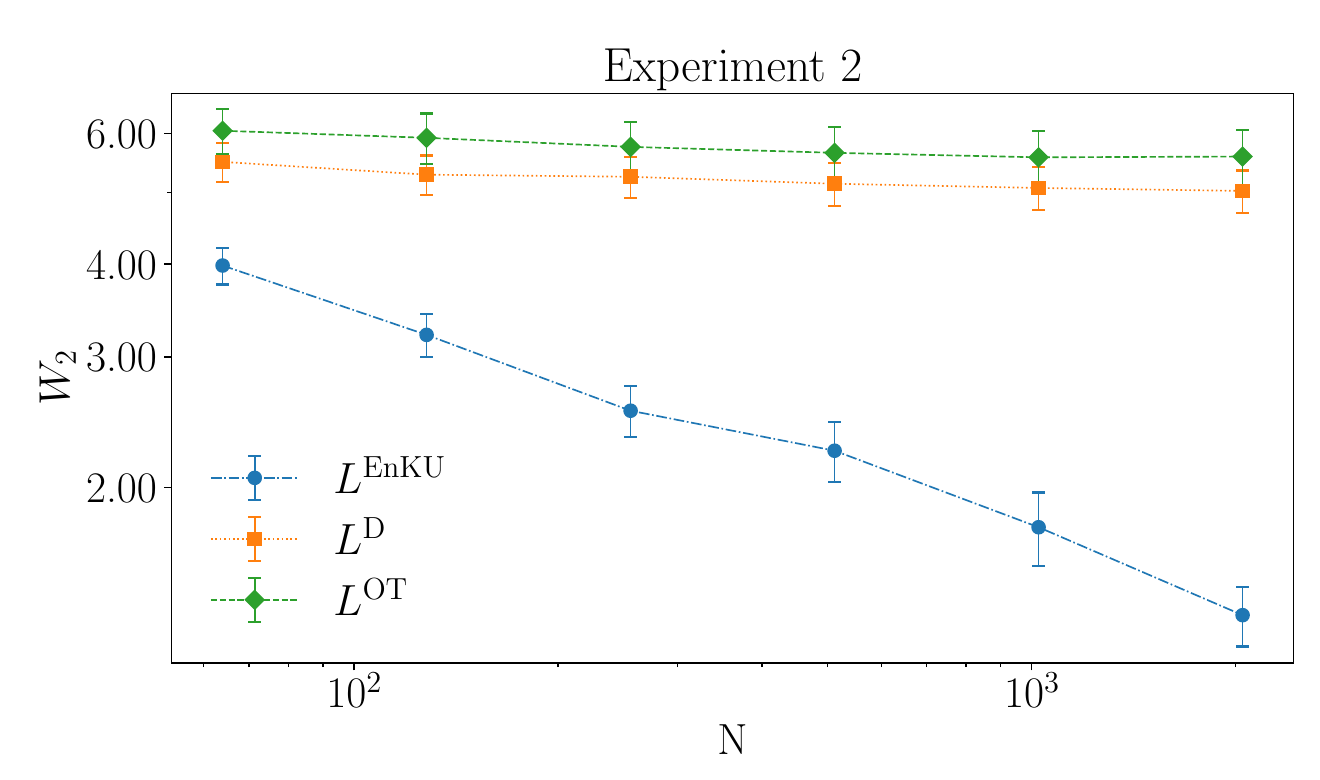}
  \end{minipage}\\
  \begin{minipage}{0.49\textwidth}
    \includegraphics[width=\linewidth,clip,trim=0 0 0 0]{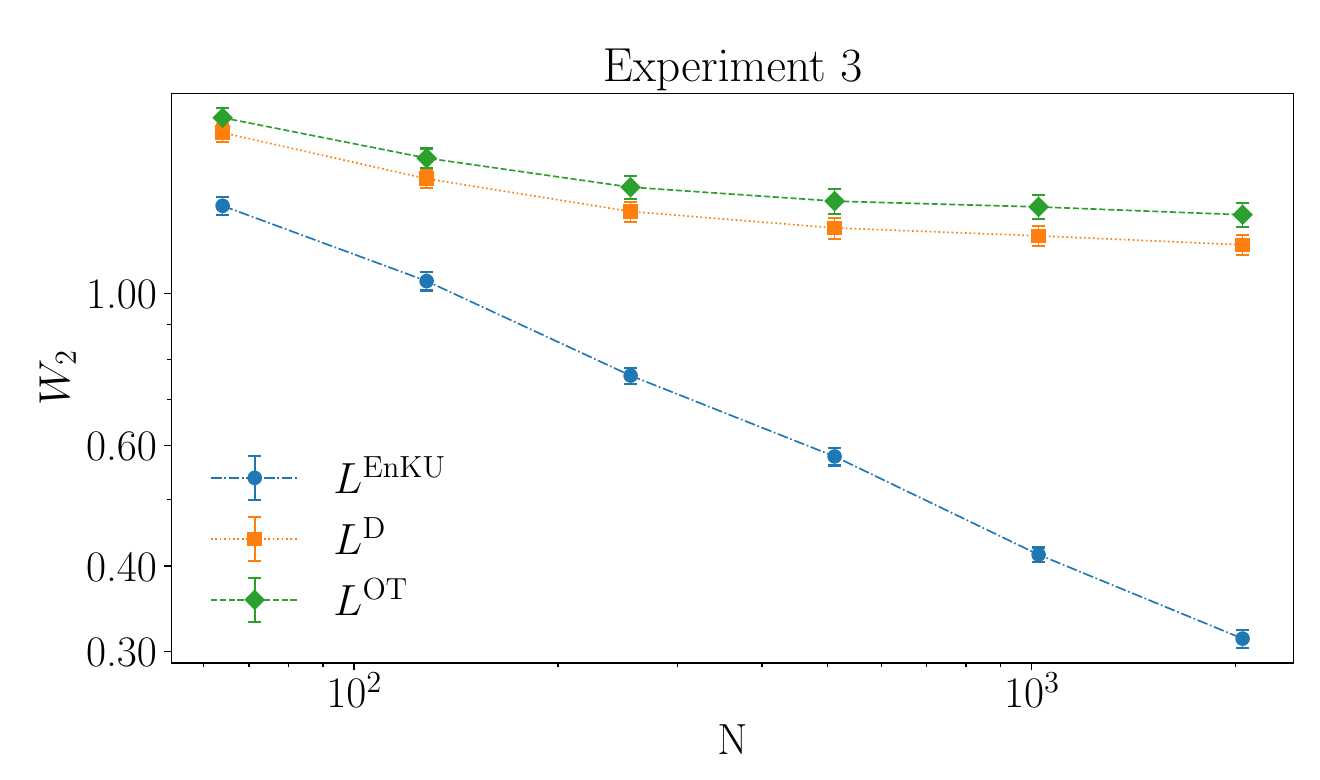}
  \end{minipage}\hfill
\caption{{Convergence of affine updates with ensemble size.} 
Log-log $W_2$ error versus ensemble size $N$ for the three data-generating models.
{Experiment 1 (Gaussian):} all Gaussian-exact affine maps exhibit decreasing error with $N$ (no bias floor). 
{Experiments 2 and 3 (non-Gaussian):} EnKU continues to improve with $N$, whereas the alternative affine maps plateau at a nonzero {bias floor}, indicating mean-field bias under non-Gaussian structure. Error bars show mean~$\pm$~standard error\ over $40$ Monte Carlo replicates. For Experiment 2, this also includes resampling of the model parameters. 
}
\label{fig:w2_vs_err}
\end{figure}

\medskip
For the posterior approximation in Equation \eqref{eq:posterior_pred_affine}, we will compare the  EnKU, with Kalman gain $\hat K = \hat \Sigma_{XY}\hat\Sigma_{YY}^\dagger$ estimated from the sample covariances $\hat \Sigma$, to two other affine updates used in likelihood-free Bayesian inversion.  First, we will compare to the non-stochastic choice, 
$$
L_{\widehat{\pi}_N, y_\star}^\text{D}(x,y) =\sqrt{\hat \Sigma_{X|Y}} \hat\Sigma_X^{\dagger/2} (x-\hat m_X) + \hat K(y_\star - \hat m_Y) + \hat m_X,
$$
which is, for example, introduced in \cite{calvello2025ensemble}.   $\hat m_Y$ (resp.\ $\hat m_X$) is the sample mean of $Y_i$ (resp.\ $X_i$) and $\hat \Sigma_{X|Y} := \hat \Sigma_{X} - \hat \Sigma_{XY} \hat \Sigma_{Y}^\dagger\hat \Sigma_{YX}$. All square roots in the equation above are principal choices and we define 
$$
M^{\dagger/2} :=\sqrt{M^\dagger}
$$
for positive semidefinite square matrices $M$. Second, we compare to another  non-stochastic affine transport given by the \textit{optimal transport} solution,
$$
L_{\widehat{\pi}_N, y_\star}^\text{OT}(x,y) =\hat \Sigma_{X}^{\dagger/2}  \left(\sqrt{\hat \Sigma_{X}}   \hat \Sigma_{X|Y} \sqrt{\hat \Sigma_{X}} \right)^{1/2}\hat \Sigma_{X}^{\dagger/2}(x-\hat m_X) + \hat K(y_\star - \hat m_Y) + \hat m_X.
$$
The choices $L_{\widehat{\pi}_N, y_\star}^\text{D}$ and  $L_{\widehat{\pi}_N, y_\star}^\text{OT}$ are particular versions of  ensemble square root filters---more specifically, \textit{ensemble adjustment Kalman filters} (EAKF) \cite{tippett2003ensemble, whitaker2002ensemble, bishop2001adaptive, hunt2007efficient}.  This can be seen by a straightforward calculation of the mean and covariance. A fuller derivation, explaining connections to the EAKF and the ensemble transform Kalman filter (ETKF), is in Supplement~\ref{section:connection_esrf}.
\begin{figure}[H]
\includegraphics[width=\linewidth,clip,trim=0 0 0 0]{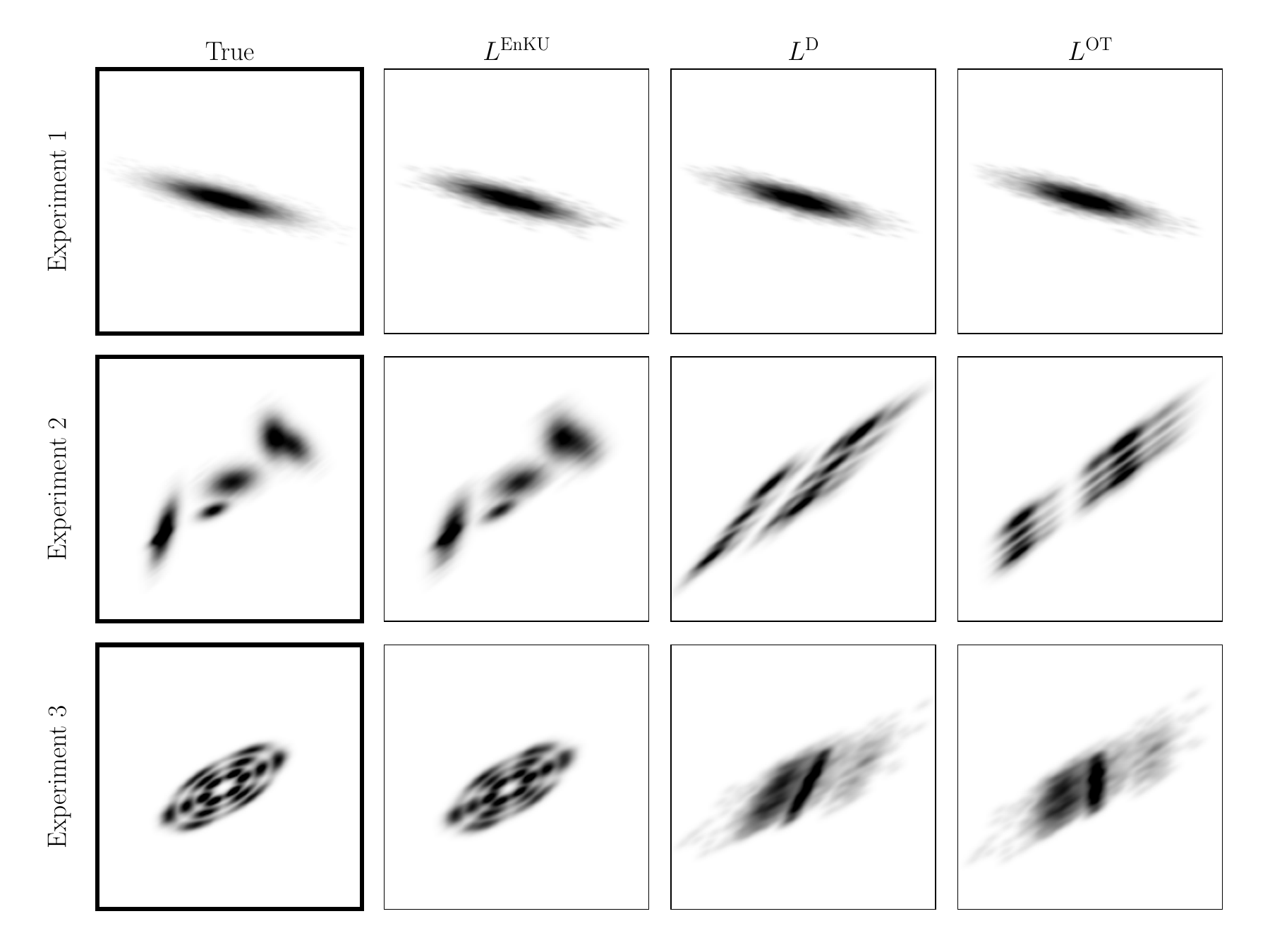}
\caption{
Posterior structure recovered by each method for $N = 8{,}192$ samples ($49{,}152$ samples for the true posterior).
For each experiment, we show i.i.d.\ samples from the true posterior  alongside posterior ensembles produced by EnKU, the deterministic map $L^{\mathrm D}$, and the OT map $L^{\mathrm{OT}}$. Densities are visualized using a two--dimensional Gaussian kernel density estimator with Scott’s rule and a reduced bandwidth factor of \(0.6\). 
In the Gaussian case (Experiment 1), all methods match the target shape. In the non-Gaussian cases (Experiments 2 and 3), the EnKU best preserves multimodality and ring structure, while $L^{\mathrm D}$ and $L^{\mathrm{OT}}$ blur or collapse features--visual evidence of the {bias floor} quantified in the $W_2$ plots.}
  \label{fig:densities}
\end{figure}
Each of these three affine maps is exact for Gaussian distributions at the mean-field level; accordingly, we expect all methods to recover the correct posterior in the Gaussian setting (Experiment 1). 
We condition on the fixed observation $y_\star=(0.4,-0.2)^\top$.   Ignoring finite sample effects, the particular value of \(y_\star\) plays no role in the relative performance of the methods, since all updates depend on \(y_\star\) exclusively through the same affine shift \(\hat K y_\star\).  We run the affine ensemble algorithms at increasing ensemble sizes, investigating the $W_2$-error of their predicted posterior compared to the true posterior.  In Figure \ref{fig:w2_vs_err},  we estimate the empirical $W_2$ between the predicted analysis ensembles and i.i.d.\ samples from the ground-truth posterior using  POT's \texttt{ot.emd2} algorithm for each ensemble size $N$.  
The results match the mean-field predictions. In {Experiment 1} (Gaussian), all three affine maps show error decreasing parametrically with $N$ and no bias floor. For distributions in $\mathcal{E}^{\mathrm{EnKU}}$ that are non-Gaussian ({Experiments 2 and 3}), the EnKU continues to improve as $N$ grows, while the alternative affine maps stabilize at a nonzero error, revealing a mean-field {bias floor}. The posterior density plots in Figure \ref{fig:densities} demonstrate this further: the EnKU reproduces the multimodal and ring-like posterior structure, whereas the other affine updates smear or collapse features, consistent with their moment-matching but distributionally biased behavior. 
\subsection{Beyond  $\mathcal{E}^{\mathrm{EnKU}}$}  
\label{subsection:non_enku_example}
In order to illustrate the class $\mathcal{S}_{\mathrm{nl\text{-}dec}}$  introduced in Theorem \ref{thm:F_char}, we present a simple scalar ($n=m=1$) example $\pi$  that lies in the nonlinear decomposability class $\mathcal{S}_{\mathrm{nl\text{-}dec}}$ but not in $\mathcal{E}^{\mathrm{EnKU}}$, for which an exact {weakly observation-dependent} affine conditioning map exists, while the EnKU is biased. Let $Y\sim\mathcal N(0,1)$ and define the nonlinear shift 
$d(y) := y^2 - 1.$ 
Fix $\lambda\in(0,1)$ and $B\in\mathbb R\setminus\{0\}$. Let $\{Y^{(k)}\}_{k\geq 0}$ be i.i.d.\ copies of $Y$, independent of $Y$, and define
\begin{equation}
\label{eq:nldec_Z_def}
Z
:= \sum_{k=0}^{\infty} \lambda^k\bigl(BY^{(k)} + \lambda d(Y^{(k)})\bigr)
\end{equation}
according to Equation \eqref{eq:S_nl_dec_rec}, which converges in $L^2$ since $\lambda\in(0,1)$ and $Y$, $d(Y)$ have finite second moments. Define the joint distribution $\pi$ of $(X,Y)$ by $X := Z + d(Y). $ 
An exact weakly observation-dependent affine conditioning map is given by 
$$
L_{\pi,y_\star}^{\mathrm{nl\text{-}dec}}(x,y)
:= \lambda x + B y + d(y_\star)
$$
as the following computation shows:
\begin{align*}
    L_{\pi,y_\star}^{\mathrm{nl\text{-}dec}}(X,Y) &= \lambda Z + \lambda d(Y) + B Y + d(y_\star) \\
   & \stackrel{d}{=} Z + d(y_\star)\\
&\stackrel{d}{=} X\mid Y=y_\star.
\end{align*}
The second equality above follows from the fixed-point-in-distribution identity $Z \stackrel{d}{=} \lambda Z + \lambda d\bigl(Y\bigr) + B Y,$ which is an immediate consequence of the definition in Equation \eqref{eq:nldec_Z_def}.  In particular, we have  $\pi\in\mathcal F$.  Further, since $\mathbb E(X\mid Y=y)=d(y)$ is nonlinear in $y$, there does not exist a representation
$X = Z_0 + MY$ with $Z_0 \indep Y$ and $M\in\mathbb R$. Consequently, $\pi \notin \mathcal{E}^{\mathrm{EnKU}}$ by Proposition \ref{prop:exact_set_kalman} and the EnKU is inherently biased for this distribution. Hence  $\pi \in \mathcal{S}_{\mathrm{nl\text{-}dec}}$.  
Indeed, in Figure \ref{fig:posterior_nl_dec} we compare the posterior distributions produced by the EnKU and by
$L_{\pi,y_\star}^{\mathrm{nl\text{-}dec}}$ according to the updating Equation \eqref{eq:posterior_pred_affine} in finite samples. 
While $L_{\pi,y_\star}^{\mathrm{nl\text{-}dec}}$ exactly recovers the true posterior, the EnKU is inexact for this distribution and exhibits strong bias. This illustrates that weakly observation-dependent affine conditioning maps can achieve exactness beyond the EnKU domain, but only for specially structured distributions.
\begin{figure}[H]
\centering
\includegraphics[width=\linewidth,clip,trim=0 0 0 0]{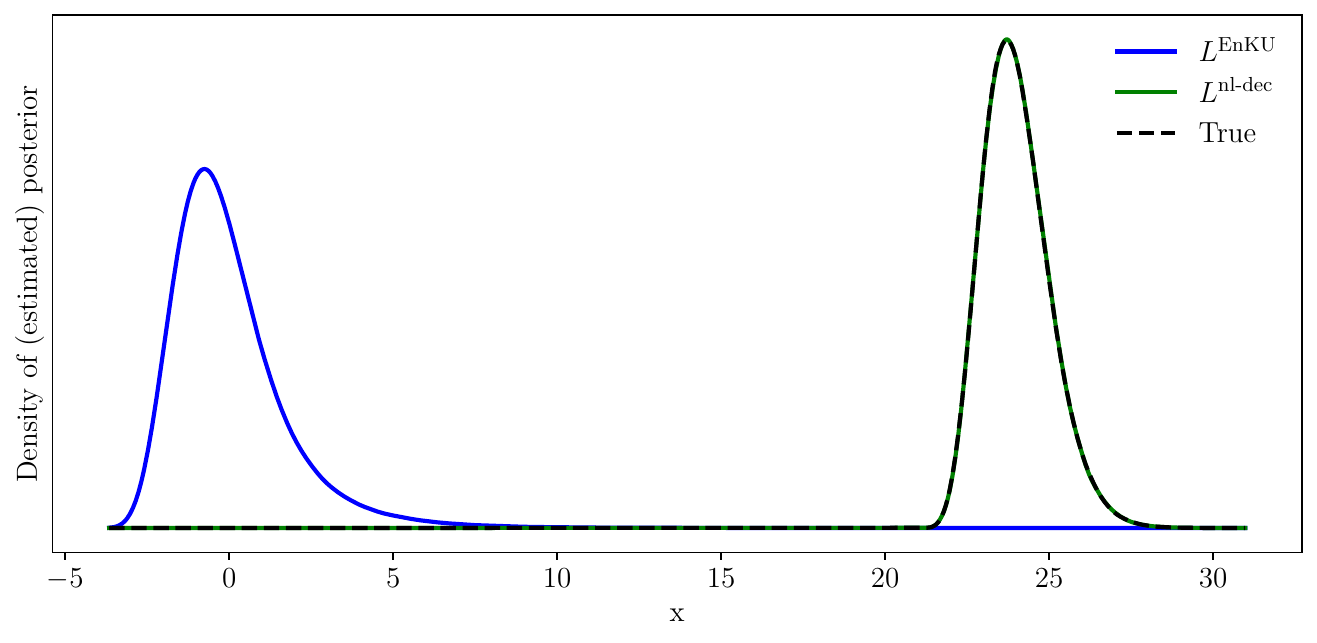}
\caption{Posterior at $y_\star=5$ produced by $L_{\pi,y_\star}^{\mathrm{nl\text{-}dec}}$ and
$L_{\pi,y_\star}^{\mathrm{EnKU}}$ using $N=10^6$ samples as in Equation \eqref{eq:posterior_pred_affine}, with parameters
$\lambda=0.1$ and $B=1$. Densities are estimated using kernel density estimation with Scott’s rule and a reduced bandwidth factor of $1.2$. 
In agreement with our theoretical analysis, the exact weakly observation-dependent map $L_{\pi,y_\star}^{\mathrm{nl\text{-}dec}}$ coincides with the true posterior, while the EnKU update is substantially biased, illustrating that exact affine conditioning beyond $\mathcal{E}^{\mathrm{EnKU}}$ can occur for highly structured nonlinear
distributions. }
  \label{fig:posterior_nl_dec}
\end{figure}
Importantly, the example presented here is quite non-generic: it relies on the strong nonlinear decomposability condition in Equation \eqref{eq:S_nl_dec_rec} defining $\mathcal{S}_{\mathrm{nl\text{-}dec}}$, which severely restricts the class of admissible
joint distributions. Accordingly, the purpose of this example is \emph{not} to advocate the use of the map $L_{\pi,y_\star}^{\mathrm{nl\text{-}dec}}$ in practice, but rather to illustrate that departing from $\mathcal{E}^{\mathrm{EnKU}}$  in Theorem~\ref{thm:F_char} requires detailed {a priori} knowledge of the joint distribution that is generally unavailable.

\section{Discussion} 
Our maximality result for weakly observation-dependent affine maps shows that there is essentially no headroom beyond the EnKU: the largest possible exact set $\mathcal{F}$ collapses to $\mathcal{E}^{\mathrm{EnKU}}$ up to the narrow symmetry class $\mathcal{S}_{\mathrm{nl-dec}}$ (Theorem \ref{thm:F_char}). Further, we show that within $\mathcal{E}^{\mathrm{EnKU}}$, the EnKU is the unique affine  exact conditioning map up to small symmetry classes $\mathcal{S}_{\mathrm{cov}}$, $\mathcal{S}_{\mathrm{dec}}$, and $\mathcal{S}_{\mathrm{cyc}}$ (Theorem \ref{thm:enkf_update_unique}).  

Many questions remain open. Importantly, our analysis is mean-field and {does not address} many practical issues---finite-$N$ sampling error, localization, covariance inflation, model error/misspecification, and adaptive tuning schemes---which are known to strongly impact performance.  Further, in practical data assimilation and inverse problems, the true joint distribution rarely lies in $\mathcal{E}^{\mathrm{EnKU}}$ and deviates even further from Gaussianity. Regardless, affine filters are applied in these settings. Therefore, another lens for studying the choice of affine filters is the bias--variance tradeoff. Affine filters are usually used in high dimensions where the dimension is large compared to the ensemble size, and where the ensemble is non-i.i.d.\ after one filtering step. For these two reasons, accepting bias in the estimator to reduce variance is inevitable. Quantifying this tradeoff in nonlinear settings remains an important direction. 
A related open question is to treat the corresponding multi-step behavior of the EnKU (e.g., in ensemble Kalman inversion) and to understand how nonlinear effects re-enter through evolving covariances \cite{iglesias2013ensemble, schillings2017analysis,schillings2018convergence}.

\appendix
\section{Connection to ensemble square root filters} 
\label{section:connection_esrf}
Here, we clarify how the affine transports used in the numerical experiments relate to square root filters. Ensemble square root filters (ESRFs) are deterministic variants of the ensemble Kalman filter that update the ensemble without requiring perturbed observations, typically improving stability and accuracy \cite{tippett2003ensemble, whitaker2002ensemble, bishop2001adaptive, hunt2007efficient}.  They are usually derived in settings where we have access to i.i.d.\ samples $\{X_i\}_{i = 1}^N$ (``forecast'') and we have the dependency $Y = HX + \xi$ with linear $H$,  independent mean-zero $\xi$, and  $\text{Cov}(\xi) = \Gamma$ finite. Define the forecast matrix $\hat X_f := \left(X_1 \ldots X_N \right)\in \R^{n \times N}$ with $n$ the state dimension and the forecast covariance $\hat C_f := \frac{1}{N-1}\hat X_f \left(I_N - \frac{1}{N}\mathbf{1}\mathbf{1}^\top\right)\hat X_f^\top$ where $\mathbf{1}$ is the vector with all entries $1$. The main idea in ESRFs  is to find an affine map 
$$
s: \R^{n\times N} \rightarrow\R^{n\times N}
$$
such that with $\hat X_a := s(\hat X_f)$ we have the following Gaussian-consistent moment conditions: 
\begin{align*}
\hat m_a& =    \hat m_f  + K\left(y_\star - H   \hat m_f \right) \\
\hat C_a&= C_a
\end{align*}
where
\begin{gather*}
\hat m_f := \frac{1}{N}\hat X_f \mathbf{1}, \quad     \hat m_a = \frac{1}{N}\hat X_a \mathbf{1}, \\
\hat C_a := \frac{1}{N-1}\hat X_a \left(I_N - \frac{1}{N}\mathbf{1}\mathbf{1}^\top\right)\hat X_a^\top,\quad  C_a = \hat C_f - \hat C_fH^\top (H\hat C_fH^\top + \Gamma)^{-1}H\hat C_f.
\end{gather*}
The prediction for the posterior $\pi_{X|Y= y_\star}$ in an ESRF is then  
$$
\widehat{\pi}^N_{X|Y=y_\star} = \frac{1}{N} \sum\limits_{i = 1}^N \delta_{\hat X_a ^i}
$$
where $\hat X_a^i$ are the columns of $\hat X_a$. There are multiple versions of ESRFs as the choice of $s$ is not unique. 
The most important versions of the ESRF are the ensemble transform Kalman filter (ETKF) \cite{bishop2001adaptive, tippett2003ensemble} and the ensemble adjustment Kalman filter (EAKF): 
\begin{enumerate}
    \item The ETKF is defined by requiring $s$ to operate on the anomaly matrix $\hat X_f^{(c)} = \hat X_f \left(I_N - \frac{1}{N}\mathbf{1}\mathbf{1}^\top\right)$ in ensemble space
$$
s(\hat X_f) = \hat X_f^{(c)}\hat T + \hat b\mathbf{1}^\top
$$
where $\hat b\in\R^n$ is a bias term uniquely determined by the first-order condition \cite{hunt2007efficient, bishop2001adaptive} . $\hat T\in \R^{N\times N}$ is therefore a matrix satisfying the second-moment condition 
$$
\hat T \hat T ^\top  =  I_N -\frac{1}{N-1} \left(\hat X_f^{(c)}\right)^\top H^\top\left(H\hat C_fH^\top + \Gamma\right)^{-1}H\hat X_f^{(c)}. 
$$
The unique principal square root of the right-hand side has been shown to be particularly stable. It is usually chosen for $\hat T$ \cite{wang2004better, hunt2007efficient,sakov2008implications, nerger2012unification}. This is unsurprising since it is  the choice that is the ``least transformative,'' i.e., $\sqrt{M} = \arg\min\limits_{\tilde M \tilde M^\top = M}\left\|\tilde M - I\right\|_F$ for $\sqrt{\cdot}$ the principal square root, $M$ positive semidefinite, and $\|\cdot \|_F$ the Frobenius norm. Therefore, we let 
$$
\hat T = \sqrt{I_N -\frac{1}{N-1} \left(\hat X_f^{(c)}\right)^\top H^\top\left(H\hat C_fH^\top + \Gamma\right)^{-1}H\hat X_f^{(c)}}
$$
be the principal square root. 
\item The EAKF, on the other hand, acts on the rows of the anomaly matrix \cite{anderson2001ensemble, tippett2003ensemble}, meaning that 
$$
s(\hat X_f) = \hat A\hat X_f^{(c)}+ \hat b\mathbf{1}^\top.
$$
$\hat A\in \R^{n\times n}$ is therefore a matrix satisfying
$$
\hat A \hat C_f\hat A^\top = \hat C_a.
$$
The symmetric solution for this equation is given by 
$$
\hat A^{(1)} = \hat C_f^{\dagger/2}  \left(\sqrt{\hat C_f}   \hat C_a \sqrt{\hat C_f} \right)^{1/2}\hat C_f^{\dagger/2}
$$
with all square roots taken as the principal choice. Another possible choice is 
$$\hat{A}^{(2)} = \sqrt{\hat C_a }\hat C_f^{\dagger/2}.
$$
In practice, the following choice of square root is used more frequently instead \cite{anderson2001ensemble, tippett2003ensemble, grooms2020note}: let 
\[
\hat A^{(3)} = \hat X_f^{(c)}  C  (I + D)^{-1/2}  G^{\dagger} F^T,
\]
where $\hat X_f^{(c)} = F G U^T$ is the SVD  and $(\hat X_f^{(c)})^\top H^\top\Gamma^{-1} H\hat X_f^{(c)}= C D C^T$  is the eigenvalue decomposition  with the eigenvectors in the null space  arranged as the final columns of \(C\). 
\end{enumerate}
As we do not make the linear assumption \(Y=HX+\xi\) in our paper, we need to translate the expressions for \(\hat T\) and \(\hat A\) to this more general setting. Doing this for the EAKF is immediate. We simply replace the estimated analysis covariance with its population counterpart:
\[
C_a' = \hat C_f - \frac{1}{N-1}\hat X_f^{(c)} \bigl(\hat Y_f^{(c)}\bigr)^\top
\left( \hat Y_f^{(c)} \bigl(\hat Y_f^{(c)} \bigr)^\top \right)^\dagger
\hat Y_f^{(c)} \bigl(\hat X_f^{(c)}\bigr)^\top .
\]
This shows directly that  \(L_{y_\star}^{\text{OT}}\)  and  \(L_{y_\star}^{\text{D}}\)   implement the EAKF updates \(\hat A^{(1)}\) and \(\hat A^{(2)}\). For the ETKF, the idea is similar. Starting with $\hat T$, we note that the expression containing $H\hat X_f^{(c)}$ or $\Gamma$ involves prior knowledge of the covariance structure, namely the assumption $Y = HX + \xi$.  The generalization of $\hat T$ to nonlinear settings is therefore 
$$
\hat T' =  \sqrt{I_N - \left(\hat Y^{(c)}_f\right)^\top \left(   \hat Y^{(c)}_f\left(\hat Y^{(c)}_f\right)^\top \right)^{\dagger}  \hat Y^{(c)}_f}
$$
with $\hat Y^{(c)}_f \in \R^{m \times N}$ the centered ensemble matrix of the observations $Y_i$.  As the second term in $\hat T'$ is a projection onto the row space of $\hat Y^{(c)}_f$, $\hat T'$ is the principal square root of a projector onto the orthogonal complement of  the row space of $\hat Y^{(c)}_f$. Orthogonal projectors are their own principal square roots  and therefore
$$
\hat T' =  I_N - \left(\hat Y^{(c)}_f\right)^\top \left(   \hat Y^{(c)}_f\left(\hat Y^{(c)}_f\right)^\top \right)^{\dagger}  \hat Y^{(c)}_f.
$$
However, now note that 
\begin{align*}
    \hat X_f^{(c)}\hat T' & = \hat X_f^{(c)} - \hat X_f^{(c)} \left(\hat Y^{(c)}_f\right)^\top \left(   \hat Y^{(c)}_f\left(\hat Y^{(c)}_f\right)^\top \right)^{\dagger}  \hat Y^{(c)}_f\\
     &= \hat X_f^{(c)} - \hat \Sigma_{XY}\hat \Sigma_{YY}^\dagger  \hat Y^{(c)}_f.
\end{align*}
This shows that the  generalized ETKF and the  EnKU perform the same update. In that sense, everything we say above about the EnKU applies to the ETKF as they are the same outside the linear-Gaussian setting. 
\begin{remark}
The presented ``generalizations'' of the ESRF are not meant to be  good filtering methods.  In fact, they forfeit the main advantage of ESRFs, namely the deterministic (non-stochastic) update.  The point of introducing them above lies instead in providing insight into the bias inherent in ESRF methods and clarifying 
their connection to the EnKU. 
\end{remark}

\section{Proofs}
We start by presenting a proof of Proposition \ref{prop:exact_set_kalman}
\begin{proof}[Proof of Proposition \ref{prop:exact_set_kalman}]
$\subseteq:$ Pick $\pi \in \mathcal{E}^\mathrm{EnKU}$, meaning that 
$$ 
\left(L^{\mathrm{EnKU}}_{\pi, y_\star}\right)_\sharp\pi = \pi_{X|Y=y_\star}
$$
$\pi_Y$-a.s.\ in $y_\star\in\R^m$. Letting $(X,Y) \sim \pi$ and defining $Z = X - KY$,   the equation above is equivalent to 
$$
\Law\left(Z + K y_\star \right) = \pi_{X|Y=y_\star}. 
$$
Since $Z$ does not depend on $y_\star$, this shows that for $\nu= \Law(Z)$ and $O(x,y) = x  +Ky$ we have 
$$
O(\cdot,y_\star)_\sharp \nu = \pi_{X|Y=y_\star}
$$
$\pi_Y$-a.s.\ in $y_\star$.  $O(\cdot,y_\star)_\sharp \nu$ is a Markov kernel, concluding this direction.

\medskip
$\supseteq$: Consider $\pi$  and its corresponding $O$ and $\nu$ as in the right-hand side  of the equation we prove in this proposition. Write    \( O(x, y) = A_1 x + A_2 y \) for matrices \( A_1 \in \mathbb{R}^{n \times n} \), \( A_2 \in \mathbb{R}^{n \times m} \), and let \( Z \sim \nu \). Then $\pi_{X \mid Y = y} = \Law(A_1 Z + A_2 y).$ Let \( Y \sim \pi_Y \), independent of \( Z \), so that  $(X, Y) := (A_1 Z + A_2 Y, Y) \sim \pi$. By direct computation,  $$\Sigma_{XY}(\pi) = \Cov(A_1 Z + A_2 Y, Y) = A_2 \Cov(Y) = A_2 \Sigma_{YY}(\pi).$$ Therefore,  $L^\mathrm{EnKU}_{\pi,y^\star}(x, y) = x + A_2 \Sigma_{YY}(\pi) \Sigma_{YY}^\dagger(\pi) (y^\star - y).$
Let \( \tilde Y \sim \pi_Y \), independent of \( (Z, Y) \). Define the projection \( P_Y := \Sigma_{YY}(\pi) \Sigma_{YY}^\dagger(\pi) \), the orthogonal projection onto \( \operatorname{Im}(\Sigma_{YY}(\pi)) \). Then $L^\mathrm{EnKU}_{\tilde Y}(A_1 Z + A_2 Y, Y) = A_1 Z + A_2 Y + A_2 P_Y (\tilde Y - Y)$. 
Because \( Y - \tilde Y \in \operatorname{Im}(\Sigma_{YY}(\pi)) \) a.s., this a.s.\ simplifies to $L^\mathrm{EnKU}_{\tilde Y}(A_1 Z + A_2 Y, Y) = A_1 Z + A_2 \tilde Y$.
Thus,
\[
\Law\left(L^\mathrm{EnKU}_{\tilde Y}(X, Y)\left|\tilde Y\right.\right) = \Law(A_1 Z + A_2 \tilde Y) = \pi_{X \mid Y = \tilde Y}
\]
concluding the proof. 
\end{proof}
Similarly, we can show Proposition \ref{prop:char_weak_set}. 
\begin{proof}[Proof of Proposition \ref{prop:char_weak_set}]
Let $\pi\in \mathcal{F}$. Then there exists a weakly observation-dependent affine map
\[
L_{\pi,y_\star}(x,y)=A(\pi)x+B(\pi)y+c(\pi,y_\star)
\]
and a Markov kernel such that there is a Borel set $Q\in \mathcal{B}(\R^m)$ with $\pi_{Y}(Q) = 1$ and 
\[
(L_{\pi,y_\star})_\sharp \pi=\pi_{X|Y=y_\star}
\]
for all $y_\star \in Q$. Since $A$ and $B$ do not depend on $y_\star$,  for any  $y_0,y_\star \in Q$ we have 
\[
\pi_{X|Y=y_\star} = T_{c(\pi,y_\star)-c(\pi,y_0)}\nu
\]
where we set $\nu:=\pi_{X|Y=y_0}$. Now, we construct a measurable  $d(\cdot)$ such that 
$$
d(y_\star) = c(\pi,y_\star)-c(\pi,y_0)
$$
for all $y_\star \in Q$ and note that this concludes the proof. Define $d : \R^m \rightarrow \R^n$ through 
\[
d(y_\star) =
\begin{cases}
 c(\pi,y_\star)-c(\pi,y_0), & y_\star \in Q, \\[6pt]
0, & y_\star \notin Q.
\end{cases}
\]
For any Borel set $W\in \mathcal{B}(\R^n)$ we have 
$$
d^{-1}(W) = (Q\cap d^{-1}(W)) \cup (Q^{\mathrm{c}} \text{ if }0 \in W) =  d_{|Q}^{-1}(W) \cup (Q^{\mathrm{c}} \text{ if }0 \in W)
$$
meaning that all we have to show is that the restriction $d_{|Q}$ is measurable.  Consider the translation map
\[
\Phi:\R^n\rightarrow \Pp_2(\R^n),\qquad \Phi(h):=T_h\nu 
\]
where $\Pp_2(\R^n)$ is endowed with the Wasserstein-topology. The map \(\Phi\) is continuous and injective; by the Lusin--Souslin Theorem \cite[Lemma 8.3.8]{cohn2013measure} and since $\R^n$ and $\Pp_2(\R^n)$ are Polish spaces, the  inverse on its image $\mathcal{O} = \Phi(\R^n)$, namely
\[
\Psi:\mathcal{O}\rightarrow \R^n,\qquad \Psi(T_h\nu)=h,
\]
is measurable with respect to the Borel algebra induced by the subspace topology of $\mathcal{O}$. By the first part of the proof in \cite[Lemma 12.4.7]{ambrosio2005gradient}, the map \(y\mapsto \pi_{X|Y=y}\) is \(\mathcal{B}(\R^m)\)-to-Borel\((\Pp_2(\R^n))\) measurable; hence its restriction \(Q\rightarrow \Pp_2(\R^n)\) is \((Q,\mathcal{B}(Q))\)-measurable. We established that on \(Q\) we have \(\pi_{X|Y=y}\in \mathcal{O}\) and thus we have that
\[
d_{|Q}(y)=\Psi\bigl(\pi_{X|Y=y}\bigr),\qquad y\in Q.
\]
and $d_{|Q}$ is measurable as a composition of measurable maps $y \mapsto \pi_{X|Y=y} \mapsto \Psi(\pi_{X|Y=y})$.  $T_{d(y_\star)}\nu$ is a valid choice of Markov kernel by measurability of $d$. Further, $d$ is $\pi_Y$-a.s.\ unique by $\pi_Y$-a.s.\ uniqueness of Markov kernels. 
\end{proof}
The following theorem, while not explicitly stated in the main paper, is the main theoretical basis for the remaining results presented in this paper. 
\begin{theorem}
\label{thm:main_result}
Let $A\in\mathbb{R}^{n\times n}$  and let $U$ be an $\mathbb{R}^n$-valued random vector with $\mathbb{E}\|U\|^2<\infty$. Assume $X\in\R^n$ is independent of $U$. Consider the fixed-point-in-distribution equation
\begin{equation}
\label{eq:fixed_point}
X \stackrel{d}{=} AX + U.
\end{equation}
By the real Jordan decomposition, there exist $A$-invariant subspaces such that 
\[
\mathbb{R}^n = V_s \oplus V_r \oplus V_u
\]
and for all restrictions $A_\bullet := A_{|V_\bullet}, \bullet \in\{u,s,r\}$
\begin{enumerate}
    \item all complex eigenvalues of $A_s$ have magnitude less than $1$
    \item all complex eigenvalues of $A_r$ have magnitude equal to $1$
    \item all complex eigenvalues of $A_u$ have magnitude larger than $1$.
\end{enumerate}
Further, decompose the complexification $V_r^\C\subseteq\C^n$
$$
 V_r^\C =  V_r^{(1)} \oplus V_r^{(2)}
$$
with $ V_r^{(1)} $ the space of all eigenvectors of $A_r$ with eigenvalues $|\lambda| = 1$. Denote by $P_\bullet$ the corresponding projections and write $X_\bullet:=P_\bullet X$, $U_\bullet:=P_\bullet U$. There exists a solution $X$ with $\mathbb{E}\|X\|_2^2<\infty$ to Equation \eqref{eq:fixed_point} if and only if $U_u$ and $U_r$  are a.s.\ constant vectors and $U_r \in \operatorname{Im}(I - A_r)$ a.s.\ The blockwise solutions, if they exist, satisfy:
\begin{itemize}
\item[\emph{(a)}] There is a \emph{unique} solution in distribution in the stable component given by
\[
X_s \stackrel{d}{=} \sum_{k=0}^{\infty} A_s^{k} U^{(k)}_s,
\]
where $\{U^{(k)}_s\}_{k\ge0}$ are i.i.d.\ copies of $U_s$, independent of each other; the series converges in $L^2$.
\item[\emph{(b)}]  $X_r^{(2)}$ is a.s.\ constant. 
\item[\emph{(c)}] $X_u$ is a.s.\ constant with the a.s.\ value 
\[
X_u = (I-A_u)^{-1} U_u. 
\]
\end{itemize}
\end{theorem}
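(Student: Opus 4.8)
The plan rests on three tools. Taking characteristic functions in Equation \ref{eq:fixed_point} and using $X\perp U$ gives the functional equation $\phi_X(t)=\phi_X(A^\top t)\phi_U(t)$; taking covariances gives the discrete Lyapunov identity $\Cov(X)=A\Cov(X)A^\top+\Cov(U)$; and unrolling Equation \ref{eq:fixed_point} $m$ times with independent copies yields, for every covector $g\in\R^n$,
\[
\operatorname{Var}\!\big(g(X)\big)=\operatorname{Var}\!\big(\langle (A^\top)^m g,\,X\rangle\big)+\sum_{k=0}^{m-1}\operatorname{Var}\!\big(\langle (A^\top)^k g,\,U\rangle\big).
\]
This last identity is the workhorse for the delicate directions. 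Since the real Jordan decomposition $\R^n=V_s\oplus V_r\oplus V_u$ is $A$-invariant and the associated spectral projections commute with $A$, all three relations restrict to the coarse blocks, so I would argue block by block and treat the finer split $V_r^\C=V_r^{(1)}\oplus V_r^{(2)}$ inside the resonant block by hand.

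For the stable block (a), $\rho(A_s)<1$ yields $\|A_s^k\|\le Ck^{d}\rho^k$ with $\rho<1$, so $\sum_k\|A_s^k\|^2<\infty$ and the series $\sum_{k\ge0}A_s^kU_s^{(k)}$ converges in $L^2$; substituting it into Equation \ref{eq:fixed_point} verifies it is a solution, and iterating the functional equation forward while $(A_s^\top)^kt\to0$ forces $\phi_{X_s}((A_s^\top)^kt)\to1$, giving uniqueness in law. For the unstable block (c), I restrict the Lyapunov identity to $V_u$: the eigenvalues of the map $\Sigma\mapsto A_u\Sigma A_u^\top$ are the products $\lambda_i\bar\lambda_j$, all of modulus $>1$, and a diagonal reduction in an eigenbasis forces $\Cov(X_u)=\Cov(U_u)=0$. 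Hence $X_u$ and $U_u$ are a.s.\ constant, and the deterministic equation $x_u=A_ux_u+u_u$ gives $X_u=(I-A_u)^{-1}U_u$ because $1\notin\sigma(A_u)$.

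On the resonant block $V_r$ the Lyapunov operator is singular, so second moments alone do not suffice and I turn to the unrolled-variance identity. For a covector $g$ dual to the bottom (the genuine eigenvector) of a Jordan chain of length $p+1$, the transpose powers satisfy $(A^\top)^kg=\lambda^k\sum_{j=0}^{p}\binom{k}{j}g_j$ with $g_p$ the top-of-chain covector, so $\langle (A^\top)^m g,X\rangle$ has variance with leading term $\binom{m}{p}^2\operatorname{Var}(X_{(p)})$ of order $m^{2p}$. Since $\operatorname{Var}(g(X))$ is finite and independent of $m$, letting $m\to\infty$ forces $\operatorname{Var}(X_{(p)})=0$; peeling the chain from the top down then makes every $V_r^{(2)}$ coordinate a.s.\ constant, which is claim (b). The same functional controls $U$: the summand $\operatorname{Var}(\langle (A^\top)^k g,U\rangle)$ has leading term $\binom{k}{p}^2\operatorname{Var}(U_{(p)})$ and grows unless the chain-directions of $U$ have zero variance, so summability of the series (again bounded by $\operatorname{Var}(g(X))$) forces $U_r$ to be a.s.\ constant. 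On the semisimple part $V_r^{(1)}$ the chains have length one, $(A^\top)^kg=\lambda^kg$ has constant modulus, so $X_r^{(1)}$ is left completely free (the self-decomposable / rotation-invariant regime) while $U_r^{(1)}$ is again forced constant by summability.

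Finally, necessity of the stated conditions follows by collecting the block results: $U_u$ and $U_r$ are a.s.\ constant, and the mean identity $(I-A)\E X=\E U$ restricted to $V_r$ reads $(I-A_r)\E X_r=U_r$, which is solvable precisely when $U_r\in\Ran(I-A_r)$ (on $V_s,V_u$ the maps $I-A_s,I-A_u$ are invertible, imposing nothing). For sufficiency, given these conditions I center $X_r$ by any solution of the mean equation so the forcing cancels on $V_r$, define $X_s$ by the convergent series, set $X_u=(I-A_u)^{-1}U_u$ and $X_r^{(2)}$ to its forced constant, and place any $A_r^{(1)}$-invariant law on $V_r^{(1)}$; taking the blocks independent produces a finite-second-moment solution. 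I expect the genuine obstacle to be step (b): making the leading-order variance growth rigorous for arbitrary complex Jordan chains of differing lengths—isolating the dominant $m^{2p}$ coefficient and verifying it cannot be cancelled by lower-order cross-covariances—while correctly passing between the real Jordan form and its complexification.
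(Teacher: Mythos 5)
Your proposal is correct in substance and reaches the same blockwise conclusions as the paper, but it takes a genuinely different route at the two places where the real work happens. For the stable block, the paper proves uniqueness by renorming so that $\|A_s\|<1$ (via a Lyapunov norm) and showing that $\mu \mapsto (A_s)_\sharp \mu * \Law(U_s)$ is a strict $W_2$-contraction, then invoking Banach's fixed-point theorem; you instead iterate the characteristic-function identity and use $(A_s^\top)^m t \to 0$. Both are complete arguments. The more significant divergence is on the resonant block: the paper reduces to the one-step Stein equation $P = J P J^* + Q$ for each complex Jordan block with $|\lambda|=1$ and proves by an entrywise induction (Lemma \ref{lem:stein-noncontractive}) that $Q=0$ and only the eigenvector entry $P_{11}$ can be nonzero, whereas you unroll the fixed-point equation $m$ steps and exploit the polynomial growth $\binom{m}{p}$ of Jordan powers. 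Your asymptotic argument buys a unified treatment of claim (b) and of the constancy of $U_r$ in one stroke; the paper's lemma buys a purely algebraic, non-asymptotic statement that is reused verbatim for the unstable block. The obstacle you flag at the end is not a real one: writing $\langle (A^\top)^m g, \overline X\rangle = c_p(m)\,\overline X_{(p)} + R_m$ with $|c_p(m)|=\binom{m}{p}$ and $\|R_m\|_{L^2}=O(m^{p-1})$, the reverse triangle inequality in $L^2$ gives $\sqrt{\operatorname{Var}(g(X))} \ge \binom{m}{p}\sqrt{\operatorname{Var}(X_{(p)})} - O(m^{p-1})$, so no cross-covariance cancellation can rescue $\operatorname{Var}(X_{(p)})>0$; and the real-versus-complex issue is handled by applying your variance identity to the real and imaginary parts of a complex covector and summing.

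The one step that is stated too loosely is the unstable block: a "diagonal reduction in an eigenbasis" presupposes that $A_u$ is diagonalizable, and no eigenbasis exists when $A_u$ has nontrivial Jordan blocks. This is precisely the case covered by the second part of the paper's Lemma \ref{lem:stein-noncontractive}. Alternatively, you can stay within your own toolkit: iterating the restricted Lyapunov identity gives $\Cov(X_u) \succeq A_u^m \Cov(X_u) (A_u^\top)^m$, and taking traces together with $\sigma_{\min}(A_u^m) \to \infty$ (which holds since $\rho(A_u^{-1})<1$) forces $\Cov(X_u)=0$, hence $\Cov(U_u)=0$ as well. With that repair, your necessity direction is complete, and your sufficiency construction (independent blocks, the $L^2$ series on $V_s$, the constant $(I-A_u)^{-1}U_u$ on $V_u$, and any constant solution of $(I-A_r)x_r = U_r$ on $V_r$, optionally perturbed by an $A_r$-invariant law on $V_r^{(1)}$) matches the paper's.
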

Before presenting a proof, we need to show a few lemmas. 
\begin{lemma}
\label{lem:B_op_norm_bound}
Consider a matrix $B\in\R^{d\times d}$ with $\rho(B)< 1$. Then there is a norm $\| \cdot \|$ on $\R^d$ such that the operator norm satisfies $\|B\| <1$.    
\end{lemma}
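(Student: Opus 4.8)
The plan is to exhibit an explicit norm by conjugating $B$ to a near-diagonal form and then transferring the resulting bound back to $\R^d$. First I would complexify and pass to the Jordan normal form: there is an invertible $S\in\C^{d\times d}$ with $S^{-1}BS=\Lambda+N$, where $\Lambda=\diag(\lambda_1,\dots,\lambda_d)$ collects the eigenvalues (each with $|\lambda_i|\le\rho(B)<1$) and $N$ is the nilpotent superdiagonal carrying the Jordan ones. The point of the Jordan form is that the only obstruction to $B$ being a contraction is this off-diagonal nilpotent part, and that part can be shrunk to be arbitrarily small by a diagonal rescaling.

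Next I would scale the Jordan basis. For $\epsilon>0$ set $D_\epsilon=\diag(1,\epsilon,\dots,\epsilon^{d-1})$; a direct computation gives $D_\epsilon^{-1}(\Lambda+N)D_\epsilon=\Lambda+\epsilon N$, since conjugation by $D_\epsilon$ multiplies the $(i,j)$ entry by $\epsilon^{j-i}$ while $N$ only carries entries one step above the diagonal. Measuring this in the $\ell^\infty$ (maximum absolute row sum) operator norm, and using that each row of a Jordan nilpotent has at most one nonzero entry, gives $\|\Lambda+\epsilon N\|_{\infty\to\infty}\le\rho(B)+\epsilon$. Choosing $\epsilon$ with $\rho(B)+\epsilon<1$ then makes the conjugated matrix a strict contraction in that norm.

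With $T:=SD_\epsilon$, I would define the candidate norm $\|v\|_\star:=\|T^{-1}v\|_\infty$ on $\C^d$. Substituting $w=T^{-1}v$ shows its induced operator norm is exactly $\|B\|_\star=\|T^{-1}BT\|_{\infty\to\infty}=\|\Lambda+\epsilon N\|_{\infty\to\infty}<1$. Finally, because $B$ is a real matrix it maps $\R^d$ into $\R^d$, so the restriction of $\|\cdot\|_\star$ to $\R^d$ is a genuine real norm, and the operator norm of $B$ with respect to it is a supremum over $\R^d\setminus\{0\}\subseteq\C^d\setminus\{0\}$, hence still bounded by $\|B\|_\star<1$.

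The step requiring the most care is this real-versus-complex transfer: the Jordan form and the clean $\ell^\infty$ estimate are only available over $\C$ (the real Jordan blocks attached to complex-conjugate eigenvalue pairs make the row-sum bound fail even when the block has spectral radius below $1$), so one must genuinely complexify and then check that the complex norm restricts to a real norm whose operator bound survives. An equally valid route that avoids complexification entirely is to fix $\rho(B)<\rho'<r<1$, use the elementary decay estimate $\|B^k\|_2\le C\rho'^k$ to guarantee convergence, and define $\|v\|:=\sum_{k\ge0}r^{-k}\|B^kv\|_2$; a one-line telescoping computation then yields $\|Bv\|=r(\|v\|-\|v\|_2)\le r\|v\|$, giving operator norm at most $r<1$. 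I would keep this weighted-sum construction in reserve as the cleaner real-only alternative.
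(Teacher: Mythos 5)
Your proof is correct, but it takes a genuinely different route from the paper. The paper avoids Jordan forms and complexification entirely: it invokes the discrete Lyapunov (Stein) equation $B^\top P B - P = -I$, which has a unique positive-definite solution $P$ when $\rho(B)<1$ (citing standard Lyapunov theory), and then defines the quadratic norm $\|x\|_P=(x^\top P x)^{1/2}$; a one-line computation gives $\|Bx\|_P^2=\|x\|_P^2-\|x\|_2^2\le\bigl(1-1/\lambda_{\max}(P)\bigr)\|x\|_P^2$, hence $\|B\|_P\le\sqrt{1-1/\lambda_{\max}(P)}<1$. Your Jordan-plus-$\epsilon$-scaling argument is the classical alternative, and you handle its one delicate point correctly: the clean row-sum bound $\|\Lambda+\epsilon N\|_{\infty\to\infty}\le\rho(B)+\epsilon$ only exists over $\C$, so you must restrict the complex norm $\|v\|_\star=\|T^{-1}v\|_\infty$ back to $\R^d$ and observe that the real operator norm is dominated by the complex one. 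The trade-offs: the paper's construction stays entirely real, produces an inner-product (Hilbertian) norm, and outsources the analytic work to the cited existence theorem for the Stein equation; your construction is self-contained modulo the Jordan canonical form and is quantitatively sharper, yielding $\|B\|\le\rho(B)+\epsilon$ for every $\epsilon>0$ rather than an unspecified $q<1$ (though the lemma and its downstream uses in Theorem \ref{thm:main_result} and Lemma \ref{lem:stein-noncontractive} only need some norm with $\|B\|<1$). Your reserve construction $\|v\|:=\sum_{k\ge0}r^{-k}\|B^kv\|_2$ with $\rho(B)<\rho'<r<1$ is also valid and real-only, though it quietly relies on Gelfand's formula to get the decay $\|B^k\|_2\le C\rho'^{\,k}$, so it is not materially more elementary than either of the other two arguments.
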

\begin{proof}
The discrete Lyapunov equation
\[
B^\top P B - P = -I
\]
has a unique positive-definite solution $P\succ0$ \cite{parks1964liapunov}. Define the (equivalent) norm $\|x\|_P := (x^\top P x)^{1/2}$. Then
\[
\|B x\|_P^2
= x^\top B^\top P B x
= x^\top (P-I)x
= \|x\|_P^2 - \|x\|_{I}^2
\le \Big(1-\tfrac{1}{\lambda_{\max}(P)}\Big)\|x\|_P^2.
\]
Note that $\|x\|_P^2 = x^\top P x= x^\top B^\top P B x + \|x\|_I^2 \geq \|x\|_I^2$ implies that $\lambda_{\max}(P)\geq 1$.  Hence $\|B\|_P \le \sqrt{1-1/\lambda_{\max}(P)}=:q<1$ as claimed.
\end{proof}
\begin{lemma}\label{lem:stein-noncontractive}
Let $J\in\mathbb R^{n\times n}$ be a Jordan block for an eigenvalue $|\lambda| = 1$. Let $Q\succeq 0$. If a symmetric $P\succeq 0$ satisfies the discrete Lyapunov (Stein) equation
\[
P = J P J^* + Q,
\]
then necessarily $Q=0$. Further, all entries of $P$ but $P_{11}$ must be zero. If instead $|\lambda| > 1$, there can only be a solution if $P = Q = 0$. 
\end{lemma}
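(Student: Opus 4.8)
The plan is to induct on the block size $n$ by peeling off the last row and column, using positive semidefiniteness to force these to vanish and then recognizing a Stein equation for the $(n-1)\times(n-1)$ Jordan block. Since $J^*$ appears, I work over $\C$ with $P,Q$ Hermitian and $\succeq 0$, and I take the standard upper-triangular convention $J=\lambda I+N$ with $N$ the nilpotent superdiagonal shift, so that $e_n$ is the unique eigenvector of $J^*$, namely $J^*e_n=\bar\lambda e_n$. The one structural fact I will lean on repeatedly is elementary but essential: a positive semidefinite matrix with a vanishing diagonal entry has the entire corresponding row and column equal to zero.

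For the case $|\lambda|=1$, I first read off the $(n,n)$ entry of the Stein equation. Using $J^*e_n=\bar\lambda e_n$ gives $(JPJ^*)_{nn}=(J^*e_n)^*P(J^*e_n)=|\lambda|^2 P_{nn}=P_{nn}$, hence $Q_{nn}=0$; by $Q\succeq0$ the whole last row and column of $Q$ vanish, i.e.\ $Q_{jn}=0$ for all $j$. Next I read off the $(j,n)$ entries for $j<n$: from $J^*e_j=\bar\lambda e_j+e_{j+1}$ and $J^*e_n=\bar\lambda e_n$ one computes $(JPJ^*)_{jn}=|\lambda|^2 P_{jn}+\bar\lambda P_{j+1,n}=P_{jn}+\bar\lambda P_{j+1,n}$, so the Stein equation yields $\bar\lambda P_{j+1,n}=-Q_{jn}=0$ and therefore $P_{j+1,n}=0$ for $j=1,\dots,n-1$. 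In particular $P_{nn}=0$, and again by $P\succeq0$ the entire last row and column of $P$ vanish as well.

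The reduction step then closes the induction. Because the last row and column of $P$ are zero, writing $J=\begin{psmallmatrix}\tilde J & u\\ 0 & \lambda\end{psmallmatrix}$ and $P=\begin{psmallmatrix}\tilde P & 0\\ 0 & 0\end{psmallmatrix}$ in block form, a direct multiplication gives $JPJ^*=\begin{psmallmatrix}\tilde J\tilde P\tilde J^* & 0\\ 0 & 0\end{psmallmatrix}$, so the off-diagonal coupling $u$ drops out and the top-left block of the Stein equation reads $\tilde P=\tilde J\tilde P\tilde J^*+\tilde Q$, the same equation one dimension lower for the $(n-1)$-Jordan block of $\lambda$. By the inductive hypothesis $\tilde Q=0$ and $\tilde P$ is supported on its $(1,1)$ entry; together with the vanishing last row/column this gives $Q=0$ and $P=P_{11}\,e_1e_1^*$. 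The base case $n=1$ is immediate, since $P=|\lambda|^2P+Q=P+Q$ forces $Q=0$. For $|\lambda|>1$ the peeling is even more direct: the $(n,n)$ entry gives $(1-|\lambda|^2)P_{nn}=Q_{nn}$, whose left side is $\le0$ (as $P_{nn}\ge0$) and right side is $\ge0$, so both vanish; PSD again kills the last rows and columns of $P$ and $Q$, the identical block reduction applies, and induction down to the scalar relation $(1-|\lambda|^2)P=Q$ forces $P=Q=0$.

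I expect the main obstacle to be the reduction step — verifying cleanly that zeroing the last row and column of $P$ collapses the top-left block of $JPJ^*$ to exactly $\tilde J\tilde P\tilde J^*$, so that the Jordan coupling $u$ genuinely disappears and the induction can close one block at a time. The accompanying delicate point is that positive semidefiniteness is used in two distinct and indispensable ways: once to promote the scalar identity $Q_{nn}=0$ to the vanishing of an entire row/column, and once more to do the same for $P$ after deducing $P_{nn}=0$; neither conclusion survives if the semidefiniteness hypothesis is dropped.
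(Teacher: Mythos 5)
Your proof is correct. For the $|\lambda|=1$ case it is essentially the paper's argument in different clothing: the paper fixes the matrix and inducts down the diagonal, showing $p_{mm}=q_{mm}=0$ for all $m\geq 2$ from the componentwise Stein relations plus the PSD row/column-vanishing fact (which it invokes as Cauchy--Schwarz), whereas you induct on the block dimension and peel off the last row and column; the two scalar identities you extract (the $(n,n)$ and $(j,n)$ entries) are exactly the paper's $(m,m)$ and $(m,m-1)$ equations, and both arguments use positive semidefiniteness in the same two indispensable places. Your block-reduction step, verifying that $JPJ^*$ collapses to $\tilde J\tilde P\tilde J^*$ once the last row/column of $P$ vanish, is sound and is the structural counterpart of the paper's ``in place'' induction. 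The genuine difference is the $|\lambda|>1$ case: the paper constructs the explicit series solution $P=-\sum_{k\geq 1}J^{-k}QJ^{-*k}$, observes it is negative semidefinite and nonzero when $Q\neq 0$, and then proves uniqueness of the unconstrained solution by vectorizing and noting that the spectrum of $J^*\otimes J$ consists of products $\bar\lambda_i\lambda_j$ of modulus greater than $1$. You avoid all of this machinery: the sign argument $(1-|\lambda|^2)P_{nn}=Q_{nn}$, with left side $\leq 0$ and right side $\geq 0$, kills the corner, PSD kills the row and column, and the identical block reduction closes the induction. Your route is more elementary (no Neumann series, no Kronecker-product spectra, no separate uniqueness step) and unifies both cases under one peeling mechanism; what the paper's approach buys in exchange is an explicit formula for the unique unconstrained solution for arbitrary $Q$, information the lemma never actually needs. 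One further point in your favor: your explicit decision to work over $\C$ with Hermitian $P,Q$ is the right reading of the statement, since the paper applies the lemma to complex Jordan blocks and complex covariance matrices even though the statement nominally says $J\in\R^{n\times n}$.
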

\begin{proof}
Consider the case $|\lambda| = 1$ first. If $n = 1$ there is nothing to show, so we can assume $n > 1$.  Say $P\succeq0$ is a matrix satisfying the Lyapunov equation.  Write $J = \lambda I+ N $. Translating the Lyapunov equation $P =(\lambda I+ N ) P (\overline\lambda I+ N^T ) + Q$ into components, writing $p_{ij}$ and $q_{ij}$ for the entries of $P$ and $Q$   yields 
$$
\lambda p_{i, j+1}+\bar{\lambda} p_{i+1, j}+p_{i+1, j+1}+q_{i j}=0
$$
for all $i, j$ with \(p_{a b}=0\) if an index exceeds \(n\). We proceed by induction over $n+1\geq m > 1$ with the hypothesis that $q_{m,m} = p_{m,m} = 0$. Our inductive base is $m = n+1$ for which there is nothing to show. Let $m > 1$ and assume that $p_{m+1,m+1} = q_{m+1,m+1} = 0$. Then by Cauchy-Schwarz also $p_{ij} = q_{ij} = 0$ if either $i$ or $j$ is $m+1$. The $(i,j) = (m,m)$ equation tells us that $q_{m,m} = 0$. The $(i, j) = (m,m-1)$ equation says $\lambda p_{m,m} + q_{m,m-1} = 0$ and by Cauchy-Schwarz $q_{m,m-1} = 0$, showing that $p_{m,m} = q_{m,m} = 0$. This induction shows that $q_{ij} = p_{ij} = 0$ except for $i = j = 1$. Finally, the $(i,j)=(1,1)$ equation is  simply $q_{i j}=0$, completing the proof of the first part. 

Now let $|\lambda| >1$. Our strategy is to construct a unique solution to the unconstrained problem for $P$ and show uniqueness for this solution.  Consider the series
\[
P = -\sum_{k=1}^\infty J^{-k} Q J^{-*k}.
\]
Because $\rho(J^{-1})<1$, by Lemma \ref{lem:B_op_norm_bound} the sequence $\|J^{-k}\|$ decays geometrically in some matrix norm, ensuring absolute convergence of the series. A direct computation shows that it solves the unconstrained equation
\[
J P J^* = -\sum_{k=1}^\infty J^{-(k-1)} Q J^{-* (k-1)}
            = -Q - \sum_{k=1}^\infty J^{-k} Q J^{-*k}.
\]
Say $Q\neq 0$. Then $P$ is negative semidefinite and nonzero contradicting positive semidefiniteness. Therefore $Q = P = 0$ for this solution. We conclude the proof by showing that this is the unique solution of the unconstrained problem. The unconstrained problem is a linear operator problem that can be vectorized
$$
\Psi(X) = \mathrm{vec}(Q)
$$
with $\Psi(X) = \mathrm{vec}(X) -(J^*\otimes J)\mathrm{vec}(X)$.  The spectrum of $J^*\otimes J$ consists of the products $\{\bar\lambda_i \lambda_j\}$ where $\{\lambda_i\}$ are the eigenvalues of $J$. Since every $|\bar\lambda_i \lambda_j|>1$, the operator $\Psi$ has a trivial kernel and the solution is unique.
\end{proof}
The following is a well-known fact following from the equidistribution theorem \cite{weyl1916gleichverteilung}.
\begin{lemma}
\label{lem:dense_in_unit_int}
Let $r \in \mathbb{R}$. The set 
\[
\{ nr \bmod 1 : n \in \mathbb{Z}\}
\]
is dense in $[0,1]$ if and only if $r$ is irrational.
\end{lemma}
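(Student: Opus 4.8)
The plan is to prove the two implications separately, dispatching the rational case by contraposition and concentrating on the implication ``$r$ irrational $\Rightarrow$ density'', which is the substantive half. Throughout I write $\{t\}$ for $t \bmod 1$ and view $[0,1)$ as a model of the circle.

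For the easy direction I would argue contrapositively: if $r = p/q$ is rational with $q \in \N$, then $\{nr\} = \{np/q\}$ takes values only in the finite set $\{0, 1/q, \dots, (q-1)/q\}$ for every $n \in \mathbb{Z}$. A finite subset of $[0,1]$ is closed and proper, hence not dense, so rationality of $r$ precludes density of the orbit. This is precisely the contrapositive of ``density $\Rightarrow$ irrational''.

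For the main direction, assume $r$ is irrational. I would first note the orbit points are pairwise distinct: $\{nr\} = \{mr\}$ would give $(n-m)r \in \mathbb{Z}$, forcing $n = m$, so $[0,1)$ contains infinitely many distinct orbit points. Fixing $\epsilon > 0$ and partitioning $[0,1)$ into finitely many subintervals each of length below $\epsilon$, the pigeonhole principle places two orbit points $\{nr\}, \{mr\}$ with $n \neq m$ in a common subinterval, so $|\{nr\} - \{mr\}| < \epsilon$; writing $k = n - m \neq 0$ this says $\{kr\} \in (0,\epsilon) \cup (1-\epsilon, 1)$, and after replacing $k$ by $-k$ if needed I may take $\alpha := \{kr\} \in (0,\epsilon)$. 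Finally I would run a covering argument: the finitely many orbit points $\{jkr\} = \{j\alpha\}$ for $0 \le j \le \lceil 1/\alpha \rceil$, when listed in increasing order, have all cyclic gaps bounded by $\alpha < \epsilon$ (consecutive steps advance by $\alpha$, and the single wrap-around gap back to $0$ is also below $\alpha$ since $0$ itself lies in the orbit). Hence every point of $[0,1]$ is within $\epsilon$ of an orbit point; letting $\epsilon \to 0$ gives density.

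The crux -- and the only step requiring real care -- is the pigeonhole extraction of an arbitrarily small nonzero return $\alpha = \{kr\}$, together with the bookkeeping of whether $\{kr\}$ sits near $0$ or near $1$ (resolved by passing to $-k$). Once such a small step is in hand, the covering argument is routine; the only detail to verify is that the wrap-around gap in the sequence $\{j\alpha\}$ is genuinely at most $\alpha$, which follows from $0$ belonging to the orbit.
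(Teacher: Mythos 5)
Your proof is correct and follows essentially the same route as the paper's: a pigeonhole argument extracts an arbitrarily small nonzero step $\alpha = \{kr\}$, and integer multiples of that step then form an $\epsilon$-net of $[0,1]$, with the rational case dispatched by finiteness of the orbit. The only cosmetic difference is that you resolve the near-$1$ case by passing to $-k$, whereas the paper absorbs it into the distance-to-nearest-integer norm $\|x\| = \min_{j\in\mathbb{Z}}|x-j|$.
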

We can use it to prove the following lemma. 
\begin{lemma}
\label{lem:angle_rot}
Consider a random vector $X \in \R^2$  that is symmetric under a rotation $R_\theta$ of angle $\theta \in [0, 2\pi)$
$$
X \stackrel{d}{=} R_\theta X. 
$$
Then $\frac{\theta}{2\pi} \in \mathbb{Q}$ or $X$ is invariant under all rotations. 
\end{lemma}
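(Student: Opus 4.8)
The plan is to pass to characteristic functions, where the distributional symmetry becomes a functional identity to which a density argument applies. Write $\phi_X(t)=\E[e^{i\langle t,X\rangle}]$ for $t\in\R^2$. Since $R_\theta$ is orthogonal, the hypothesis $X\stackrel{d}{=}R_\theta X$ is equivalent to $\phi_X(t)=\phi_X(R_\theta^\top t)=\phi_X(R_{-\theta}t)$ for all $t$, and substituting $t\mapsto R_\theta t$ this reads $\phi_X(R_\theta t)=\phi_X(t)$ for all $t$. Iterating the rotation gives
\[
\phi_X(R_{k\theta}\,t)=\phi_X(t)\qquad\text{for all }k\in\Z,\ t\in\R^2,
\]
so $\phi_X$ is invariant under every rotation in the cyclic group generated by $R_\theta$, namely the angles $\{k\theta \bmod 2\pi : k\in\Z\}$.

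Next I would split on the arithmetic of $\theta/2\pi$. If $\theta/2\pi\in\Q$ we are immediately in the first alternative of the statement and there is nothing more to prove. If instead $\theta/2\pi$ is irrational, then by Lemma \ref{lem:dense_in_unit_int} the set $\{k\theta/2\pi \bmod 1:k\in\Z\}$ is dense in $[0,1]$, equivalently the set of angles $\{k\theta \bmod 2\pi\}$ is dense in $[0,2\pi)$. Now fix $t\in\R^2$ and consider the map $\alpha\mapsto \phi_X(R_\alpha t)$. This is continuous in $\alpha$, since $\alpha\mapsto R_\alpha t$ is continuous and characteristic functions are (uniformly) continuous. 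It equals the constant $\phi_X(t)$ on the dense set $\{k\theta \bmod 2\pi\}$, so by continuity $\phi_X(R_\alpha t)=\phi_X(t)$ for every $\alpha\in[0,2\pi)$. As $t$ was arbitrary, $\phi_X$ is invariant under all rotations, and since characteristic functions determine laws, $X$ is invariant under all rotations, which is the second alternative.

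There is no serious obstacle here; the argument is a clean combination of (i) translating the distributional identity into invariance of $\phi_X$, (ii) the density dichotomy already isolated in Lemma \ref{lem:dense_in_unit_int}, and (iii) promoting invariance on a dense set of angles to invariance on all angles via continuity of $\phi_X$. The only point requiring a little care is the continuity step: one must invoke continuity of the characteristic function (equivalently, weak continuity of $\alpha\mapsto (R_\alpha)_\sharp\Law(X)$) to close the density argument, and note that equality of characteristic functions is genuinely equivalent to equality in law so that the conclusion is about $\Law(X)$ and not merely about $\phi_X$.
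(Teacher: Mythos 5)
Your proof is correct and follows essentially the same route as the paper: split on whether $\theta/2\pi$ is rational, invoke Lemma \ref{lem:dense_in_unit_int} for density of the orbit of angles, and promote invariance on that dense set to all rotations by continuity. The paper runs this argument with arbitrary bounded continuous test functions and dominated convergence, while you specialize to the family $x\mapsto e^{i\langle t,x\rangle}$ and use continuity of the characteristic function — a cosmetic rather than substantive difference.
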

\begin{proof}
Assume $\frac{\theta}{2\pi} \not \in \mathbb{Q}$ and define the set $S = \left\{\frac{\theta k}{2\pi}  \bmod 1\left|  k \in \N \right. \right\}$.  $S$ is dense in $[0, 1)$ by Lemma \ref{lem:dense_in_unit_int}. Pick any point $s \in [0, 1)$ and choose a sequence $s_k \in S$ such that $\lim\limits_{k\rightarrow \infty}s_k = s$.  Consider any $f: \R^2 \rightarrow \R$ that is bounded and continuous. By repeatedly applying invariance, we have $\mathbb{E}\left(f(R_{2\pi s_k} X)\right) = \mathbb{E}\left(f(X)\right)$ for any $k$. Therefore,
$$
\mathbb{E}\left(f(R_{2\pi s} X)\right)  = \mathbb{E}\left(\lim\limits_{k \rightarrow \infty}f(R_{2\pi s_k} X)\right)  = \lim\limits_{k \rightarrow \infty}\mathbb{E}\left(f(R_{2\pi s_k} X)\right) =  \lim\limits_{k \rightarrow \infty}\mathbb{E}\left(f(X)\right)  = \mathbb{E}\left(f(X)\right). 
$$
where the second equality is the dominated convergence theorem. 
\end{proof}
We are now in a position to prove Theorem \ref{thm:main_result}. 
\begin{proof}[Proof of Theorem \ref{thm:main_result}]
Suppose $X$ with $\mathbb{E}\left\|X \right\|^2 _2< \infty$  is a solution of the fixed-point equation. We will proceed by showing that this implies that $U_u$ and $U_r$  are a.s.\ constant vectors, $U_r \in \operatorname{Im}(I - A_r)$ a.s.,  and $X$ satisfies (a)--(c). From $X \stackrel{d}{=} AX+U$ and subspace-invariance we can conclude the following equations:
$$
X_\bullet \stackrel{d}{=}A_\bullet X_\bullet + U_\bullet.
$$
We proceed by treating each block separately.

\medskip
\noindent {(a) Stable block $V_s$.} 
We can choose a norm with $\|A_s\|<1$ by Lemma \ref{lem:B_op_norm_bound} since $\rho(A_s) < 1$.  Define $\mathcal T(\mu):=(A_s)_\#\mu * \Law(U_s)$ on the metric space $(\mathcal P_2(V_s), W_2)$ with $*$ the convolution of distributions. Pushforward by $A_s$ is Lipschitz in the Wasserstein-2 metric induced by $\|\cdot\|$ with constant $\|A_s\|<1$ and convolution is 1-Lipschitz, so $\mathcal T$ is a strict contraction; by Banach's fixed-point theorem, there is a unique fixed point $\mu_s$. Now, let $m:=\mathbb{E}U_s$ and write $U^{(k)}_s=\tilde U^{(k)}_s+m$ with i.i.d.\ copies $\{U^{(k)}_s\}_{k\geq -1}$ of $U_s$. Set
\[
X_s^{\mathrm{det}}:=\sum_{k=0}^\infty A_s^k m,
\qquad
X_s^{\mathrm{rnd}}:=\sum_{k=0}^\infty A_s^k \tilde U_s^{(k)}.
\]
Since $\|A_s\|<1$, the Neumann series $\sum\limits_{k\ge0}A_s^k$ converges in operator norm and  $X_s^{\mathrm{det}}$ is well-defined. For the random series, 
\[
\mathbb{E}\big\|\sum_{k=N}^M A_s^k \tilde U_s^{(k)}\big\|^2
= \sum_{k=N}^M \mathbb{E}\|A_s^k \tilde U_s\|^2
\le \sum_{k=N}^M \|A_s^k\|^2 \mathbb{E}\|\tilde U_s^{(0)}\|^2,
\]
where the cross terms vanish because the summands are independent and centered. By the geometric series $\sum\limits_{k\ge0}\|A_s^k\|^2<\infty$, this shows Cauchy in $L^2$, hence $X_s^{\mathrm{rnd}}$ converges in $L^2$ by completeness. Defining
\[
X_s \ :=\ X_s^{\mathrm{det}} + X_s^{\mathrm{rnd}},
\]
by the continuous mapping theorem 
\[
A_s X_s + U_s^{(-1)}
{=}\ \sum_{k\ge1} A_s^k U_s^{(k-1)} + U_s^{(-1)}
\ \stackrel{d}{=} \ \sum_{k\ge0} A_s^k U_s^{(k)}
\ =\ X_s. 
\]
Thus $\Law(X_s)$ is the unique fixed point on $V_s$. 

\medskip
\noindent (b) Rotational block $V_r$. Choose a complex basis $v_1, \ldots, v_{d_r}$ of the complexified space $(V_r)^\C$  with $d_r$ its dimension and put $A_r$ into its complex Jordan form $\mathrm{diag}(J_1, \ldots, J_ {n_r})\in \C^{d_r \times d_r}$ with Jordan blocks $J_i$. 
For every Jordan block, the distributional equation 
$$
X_r^i \stackrel{d}{=}J_iX_r^i + U_r^i
$$
holds where $X_r^i, U_r^i$ are the coordinates of $X_r, U_r$ in the Jordan block $J_i$. Computing complex covariances yields 
$$
P = J _iP J_i^* + Q
$$
for $P$ and $Q$ the complex covariance matrices of $X_r^i$ and $U_r^i$. Apply the first part of Lemma \ref{lem:stein-noncontractive} to see from this that $Q = 0$ and that $P_{11}$ is the only nonzero index of $P$. Note that $P_{11}$ corresponds to the eigenvector in the Jordan chain of $J_i$. Applying this argument to every block shows that $U_r$ is a.s.\ constant and the only potentially non-a.s.-constant part of $X_r$ is the eigenvector component $X_r^{(1)}$. 
Note also by taking expectations that
$$
(I - A_r) \mathbb{E}(X_r) = \mathbb{E}(U_r) 
$$
which means that since $U_r$ is a.s.\ constant it must be a.s.\ in the image of $(I-A_r)$.   

\medskip
\noindent (c) Unstable block $V_u$ ($\rho(A_u)>1$). Using the same Jordan reduction argument as in (b), we arrive at the equation
$$
P = J P J^* + Q.
$$
Apply the second part of Lemma \ref{lem:stein-noncontractive} to conclude that $U_u$ and $X_u$ are a.s.\ constant. The distributional equation becomes an a.s.\ equation and we have  that a.s.
$$
X_u= (I - A_u)^{-1}U_u.
$$
Now, conversely, say that $U_u$ and $U_r$  are a.s.\ constant vectors with $U_r \in \operatorname{Im}(I - A_r)$ a.s. Construct the solution blockwise and make the blocks statistically independent so that blockwise satisfaction of the distributional equation is sufficient. In the stable and unstable blocks choose the solution as described in the theorem statement. Finally, choose $X_r$ constant such that it solves the linear equation
$$
(I - A_r)X_r = U_r
$$
a.s. It is clear that this is a valid solution from our previous argument, completing the proof. 
\end{proof}
Using Theorem \ref{thm:main_result}, we can prove Theorem \ref{thm:enkf_update_unique}. 
\begin{proof}[Proof of Theorem \ref{thm:enkf_update_unique}]
$\pi \in\mathcal{E}^\mathrm{EnKU}$ means that there is a distribution $\nu \in\Pp_2\left(\R^n\right)$  such that for $Z\sim \nu $ independent of $Y\sim \pi_Y$, $(X,Y) = (Z+ MY,Y) \sim \pi$ for $M= \Sigma_{XY}\Sigma_{YY}^\dagger$. 
Fix $y_\star\in \R^m$ and an affine map $\ell(x,y) = Ax +  By  + c$ that conditions exactly at $y_\star$.  This means that 
$$
AX + BY +c \stackrel{d}{=}Z + My_\star
$$
which can be rewritten as 
$$
A\overline Z+  (AM + B)\overline Y  + ((A-I)\E(Z) + (AM + B)\E(Y)  +c-My_\star) \stackrel{d}{=} \overline  Z.
$$
Defining $U = (AM + B)\overline Y  + ((A-I)\E(Z) + (AM + B)\E(Y)  +c-My_\star) $, this is equivalent to the following fixed point equation with $Z\indep U$: 
$$
A\overline  Z + U \stackrel{d}{=}\overline Z.
$$

\noindent (a) $\pi \not \in \mathcal{S}_{\mathrm{cov}}$. Assume $\nu$ has a non-singular covariance meaning it does not have a constant linear component. By Theorem \ref{thm:main_result} (in the notation of the theorem), $Z_r^{(2)}$ and  $Z_u$ are a.s.\ constant. However, as we assumed that $Z$ has non-singular covariance, this means that the sum of generalized eigenspaces of $A$ with $|\lambda|>1$  is empty and  that $A$ is diagonalizable over the generalized eigenspace of all eigenvalues with magnitude $1$. In particular, $\rho(A) \leq 1$. 

\medskip
\noindent {(b) $\pi \not \in \mathcal{S}_{\mathrm{dec}}$.} Let $\pi \not \in \mathcal{S}_{\mathrm{dec}}$ and assume that $V_s$ is non-trivial, meaning that there is at least one complex eigenvalue $\lambda$ of $A$ with magnitude $|\lambda|<1$. There exists a left nonzero eigenvector $p \in \C^n$  such that
$$
p^\top A = \lambda p^\top.
$$
Plugging into the fixed point equation yields the 1D fixed point equation for $p^\top\overline{Z} $
$$
p^\top\overline{Z} \stackrel{d}{=} \lambda p^\top \overline{Z} + p^\top U.
$$
By point (a) of Theorem \ref{thm:main_result}  this implies that 
$$
p^\top \overline{Z} \stackrel{d}{=} \sum\limits_{k  = 0}^\infty \lambda^k p^\top U^{(k)}
$$
for i.i.d.\ copies $U^{(k)}$ of $U$. Writing $q^\top = p^\top(AM + B)$ and centering,  we can rewrite this as 
$$
p^\top \overline  Z \stackrel{d}{=}  \sum\limits_{k = 0}^\infty\lambda^k q^\top \overline  Y^{(k)}. 
$$
for i.i.d.\ copies $Y^{(k)}$ of $Y$. However, this means that $\pi \in \mathcal{S}_{\mathrm{dec}}$ and so $V_s$ must have been trivial. This implies that $U$ is constant by Theorem \ref{thm:main_result} and therefore $(AM + B)P_Y = 0$ where $P_Y$ is the orthogonal projector onto the column space of $\text{Cov}(Y)$. This part of the statement is finalized by recognizing that $MP_Y = \Sigma_{XY}\Sigma_{YY}^\dagger P_Y$ as shown in the proof of Proposition \ref{prop:exact_set_kalman}. 

\medskip
\noindent {(c) $\pi \not \in \mathcal{S}_{\mathrm{cyc}}$.} Finally, assume that $\pi \not \in \mathcal{S}_{\mathrm{cyc}}$. By projection, we have that
$$
\overline{Z}_r \stackrel{d}{=} A_r \overline{Z}_r +{U}_r.
$$
By Theorem \ref{thm:main_result}, $U_r$ is a.s.\ constant. Taking expectations shows that since $\overline{Z}_r$ is mean-zero, $U_r$ is a.s.\ $0$ so that we have 
$$
\overline{Z}_r \stackrel{d}{=} A_r \overline{Z}_r.
$$
Assume that $A_r$ has an eigenvalue $|\lambda| = 1$ with $\lambda \neq 1$ and derive a contradiction.  Consider the case $\lambda = -1$. Then there is a nonzero real $p$ such that $p^\top A = -p^\top$. This implies that 
$$
p^\top\overline{Z}_r \stackrel{d}{=} -  p^\top\overline{Z}_r 
$$
contradicting $\pi \not \in \mathcal{S}_{\mathrm{cyc}}$ for $Z_{\mathrm{cyc}} =(p^\top   P_r  Z, p^\top P_r  Z)$ and angle $\theta = \pi$. So $\lambda$ cannot be real. Write $\lambda = e^{i\theta}$ and let $p = p_1 + ip_2$ be a nonzero left eigenvector for $\lambda$. Note that neither $p_1$ nor $p_2$ can be zero as otherwise the equation $A_rp_i = e^{i\theta} p_i$ would hold for one of $i = 1, 2$. This is impossible because the left-hand side is purely real while the right-hand side is not. Taking real and imaginary parts of $A_r^\top p =e^{i\theta}p$ yields
$$
A_r^\top p_1 = \cos \theta p_1 - \sin \theta p_2,\quad A_r^\top p_2 = \sin \theta p_1 + \cos \theta p_2.
$$
This implies that 
\begin{align*}
(p_1^\top A_r \overline{Z}_r, p_2^\top A_r \overline{Z}_r)^\top &\stackrel{d}{=}   ((\cos \theta p_1 - \sin \theta p_2)^\top\overline{Z}_r,(\sin \theta p_1 + \cos \theta p_2)^\top \overline{Z}_r)^\top \\
&\stackrel{d}{=}    R_\theta(p_1^\top \overline{Z}_r, p_2^\top \overline{Z}_r)^\top.
\end{align*}
By Lemma \ref{lem:angle_rot}, we have \(\theta = 2\pi s/t\) for some \(1 \leq s < t\), \(t \in \mathbb N\), with \(\gcd(s,t)=1\). Since the cyclic subgroup of \(\mathbb R/\mathbb Z\) generated by \(s/t\) is the same as the one generated by \(1/t\), iterating the preceding invariance relation shows that the same relation also holds with angle \(2\pi/t\). This implies \(\pi \in \mathcal S_{\mathrm{cyc}}\), contradicting \(\pi \notin \mathcal S_{\mathrm{cyc}}\). Hence we must have \(\lambda=1\). 
\end{proof}
Corollary \ref{cor:enkf_char} follows from Theorem \ref{thm:enkf_update_unique}. 
\begin{proof}[Proof of Corollary \ref{cor:enkf_char}]
Since $\pi \not \in \mathcal{S}_{\mathrm{cov}}$, Theorem \ref{thm:enkf_update_unique} implies that $\rho(A) \leq 1$  and $A$ is diagonal in the generalized eigenspace of all eigenvalues with magnitude $1$. Further, since $\pi \not \in \mathcal{S}_{\mathrm{dec}}$, the spectrum of  $A$ has no eigenvalues with magnitude smaller than $1$, so we can write $A = PDP^{-1}$ for $P, D\in\C^{n\times n}$ with $D$ diagonal and all diagonal entries of complex magnitude one. As $\pi \not \in \mathcal{S}_{\mathrm{cyc}}$,  $A$ has no eigenvalues with $|\lambda| = 1$ and  $\lambda \neq 1$, so $A = I$. Additionally, by $\pi \not \in \mathcal{S}_{\mathrm{dec}}$ and full covariance rank of $\Sigma_{YY}$,
$$B =-K $$ where $K = \Sigma_{XY}\Sigma_{YY}^\dagger$. Finally, we derive the value of $c$. Pick $\nu$ such that $(Z+ MY,Y) \sim \pi$ for $Z \sim \nu$ independent of $Y\sim \pi_Y$ and $M = K$. 
Then, by exactness
$$
Z+KY +BY   +c\stackrel{d}{=} Z + Ky_\star.
$$
Taking expectations on both sides shows 
$$
c = Ky_\star
$$
and completes the proof. 
\end{proof}
Now, we can prove Theorem \ref{thm:F_char}.
\begin{proof}[Proof of Theorem \ref{thm:F_char}]
Say $\pi \in \mathcal{F} \cap \mathcal{S}_\mathrm{nl-dec}^{\mathrm{c}}$. We show $\pi \in \mathcal{E}^{\mathrm{EnKU}}$. By Proposition \ref{prop:char_weak_set} there are measurable $d:\R^m\rightarrow \R^n$ and $ \nu\in\Pp_2\left(\R^n\right)$ such that 
$$
\pi_{X|Y=y_\star} = T_{d(y_\star)}\nu\text{ for all }y_\star \in \R^m.
$$
Letting $(X,Y) = (Z+d(Y),Y)$ for $Z\sim \nu$ independent of $Y \sim \pi_Y$, this means that $(X,Y) \sim \pi$.  Since $\pi$ has an exact weakly observation-dependent affine conditioning map there are  $A \in \R^{n\times n}$, $B\in\R^{n\times m}$, and $c:\R^m \rightarrow \R^n$  such that  
$$
AZ + Ad(Y)  + BY + c(y_\star) \stackrel{d}{=}Z + d(y_\star)
$$
$\pi_Y$-a.s. For any such $y_\star$ we can rewrite this as 
$$
A\overline Z + U \stackrel{d}{=}\overline Z
$$
for $U =(A-I) \mathbb{E}(Z) + Ad(Y)-d(y_\star) + BY + c(y_\star)$. Theorem \ref{thm:main_result} implies that $U_u$ and $U_r$  are a.s.\ constant vectors. Further, writing $A_s = P_sAP_s$, we have
\[
\overline Z_s \stackrel{d}{=} \sum_{k=0}^{\infty} A_s^{k}U^{(k)}_s
\]
for $U^{(k)}_s$ independent copies of $U_s$ that are chosen through independent copies $Y^{(k)}_s$ of $Y$. Say $V_s$ is nontrivial, then there is a nonzero eigenvector $p$ of $A_s^\top$ with eigenvalue $|\lambda| < 1$. This implies that 
$$
p^\top\overline Z_s \stackrel{d}{=} \sum_{k=0}^{\infty} \lambda^{k}p^\top U^{(k)}_s.
$$
for some $|\lambda|< 1$. We can expand 
$$
p^\top U^{(k)}_s  =p^\top P_sBY^{(k)}  +   p^\top P_s Ad(Y^{(k)})+ p^\top P_s\left((A-I) \mathbb{E}(Z)-d(y_\star)  + c(y_\star)\right)
$$
and defining  $b =  \frac{1}{1-\lambda}  {p^\top P_s\left((A-I) \mathbb{E}(Z)-d(y_\star)  + c(y_\star)\right)}$, $w^\top  =p^\top P_sB$, $u^\top = p^\top P_s A$, $v^\top = p^\top P_s$ we can rewrite this as 
$$
v^\top \overline Z  = \sum\limits_{k = 0}^\infty \lambda^k \left(w^\top Y^{(k)} + u^\top d(Y^{(k)})\right) + b.
$$
Since $v \neq 0$, this contradicts $\pi \in \mathcal{S}_\mathrm{nl-dec}^{\mathrm{c}}$ and thus we must have that $V_s$ is trivial. Therefore, $U$ (and in particular $Ad(Y) + BY$ ) is a.s.\ constant.  Further, $A\in \mathrm{GL}(n)$ as the stable block is trivial.  Therefore, almost surely
$$
d(Y) =- A^{-1}BY  +f
$$
for a constant $f \in \R^n$, meaning that $d$ is a.s.\ affine. However, then $\pi \in \mathcal{E}^{\mathrm{EnKU}}$. 
\end{proof}
Finally, we prove the statement from Remark \ref{rem:small_symmetry}, restated here in the form of a proposition. 
\begin{proposition}
\label{prop:small_symmetry}
Define
\[
\mathcal E^{\mathrm{EnKU}}_0
:=
\bigl\{
\pi \in \mathcal E^{\mathrm{EnKU}} \, \big| \, \Sigma_{YY}(\pi)\text{ is invertible}\bigr\},
\]
and endow it with the relative \(W_2\) topology. Then $\mathcal S_{\mathrm{dec}} \cap \mathcal E^{\mathrm{EnKU}}_0$ and $\mathcal S_{\mathrm{cyc}} \cap \mathcal E^{\mathrm{EnKU}}_0$ are meagre subsets of \(\mathcal E^{\mathrm{EnKU}}_0\). 
\end{proposition}
\begin{proof}[Proof of Proposition \ref{prop:small_symmetry}]
% \fjtd{This proof is also new.} 
By Proposition \ref{prop:exact_set_kalman}, every \(\pi \in \mathcal E^{\mathrm{EnKU}}_0\) admits a representation
\[
\pi=\Law(Z+MY,Y),
\qquad Z\indep Y,
\]
with \(M\in \R^{n\times m}\). Taking the cross-covariance of the left- and right-hand sides yields
\[
M \Sigma_{YY}(\pi) = \Sigma_{XY}(\pi),
\]
which uniquely determines the decomposition. Now define 
\[
\mathcal X_0
:=
\Pp_2(\R^n)\times \Pp_{2,\mathrm{nd}}(\R^m)\times \R^{n\times m},
\]
with the nondegenerate set
\[
\Pp_{2,\mathrm{nd}}(\R^m)
:=
\bigl\{
\nu\in \Pp_2(\R^m)\big| \Sigma(\nu)\text{ is invertible}
\bigr\}
\]
where $\Sigma(\nu)$ is the full covariance matrix of $\nu$,  and let
\[
\Phi(\mu,\nu,M):=\Law(Z+MY,Y),
\qquad Z\sim \mu, Y\sim \nu, Z\indep Y.
\]
$\Phi$ is continuous by a standard $W_2$-stability triangle inequality argument. 
Its inverse is
\[
\Phi^{-1}(\pi)
=
\Bigl(
(x-My)_\sharp \pi,
\pi_Y,
M
\Bigr),
\qquad
M=\Sigma_{XY}(\pi)\Sigma_{YY}(\pi)^{-1}.
\]
The inverse map is also continuous. For the third component, continuity follows since second moments depend continuously on $\pi$ in $W_2$, and matrix inversion is continuous on the open set of invertible matrices. Continuity of the second component is immediate. For the first component, continuity follows from a standard $W_2$ stability argument: linear pushforwards are continuous in $W_2$, and combining this with a triangle inequality to also control the $M$-term yields the claim. Hence
\[
\Phi:\mathcal X_0 \rightarrow \mathcal E^{\mathrm{EnKU}}_0
\]
is a homeomorphism. It therefore suffices to prove meagreness of the pullbacks of the sets under consideration to $\mathcal X_0$. 
\\ 
Further, let \(\mathcal A\subset \Pp_2(\R^n)\) be the set of all finitely supported distributions
\[
\mu=\sum_{i=1}^N p_i \delta_{x_i}
\]
with $p_i>0$ and $x_i$ distinct such that:
\begin{enumerate}
\item the centered support spans \(\R^n\), i.e.,
\[
\spann\{x_i-m(\mu):1\le i\le N\}=\R^n,
\qquad
m(\mu):=\int xd\mu(x),
\]
\item all subset sums of the weights are distinct:
\[
\sum_{i\in I} p_i = \sum_{i\in J} p_i
\quad\Longrightarrow\quad
I=J.
\]
\end{enumerate}
Importantly, the  set $\mathcal A$ is dense in $\Pp_2(\R^n)$. Indeed, finitely supported distributions are dense in $W_2$, and the two additional properties can be enforced by arbitrarily small perturbations of any finitely supported distribution.  More precisely, the spanning condition can be ensured by splitting a support point into $n+1$ affinely independent points placed arbitrarily close together. The distinct subset-sum condition can be achieved by perturbing the weights: the configurations violating this property are characterized by a finite union of proper affine hyperplanes in the probability simplex, whose complement is therefore dense. 

\medskip
\noindent (a) Meagreness of \(\mathcal S_{\mathrm{cyc}}\). We prove that $\mathcal S_{\mathrm{cyc}}$ is meagre by writing it as a countable union of closed sets, each disjoint from $\mathcal A$. Since $\mathcal A$ is dense, it follows that each of these sets is nowhere dense. To define these sets, for \(k\ge 2\) and \(q\ge 1\), let
\[
K_{q}
:=
\Bigl\{
A\in \R^{2\times n}
:
\|A\|_{F}=1,
\|A_{1\cdot}\|\ge q^{-1},
\|A_{2\cdot}\|\ge q^{-1}
\Bigr\},
\]
and write \(R_k:=R_{2\pi/k}\). Also define
\[
\overline{\mu}
:=
(x\mapsto x-m(\mu))_\sharp \mu.
\]
and
\[
C_{k,q}
:=
\Bigl\{
\mu\in \Pp_2(\R^n)
:
\exists A\in K_q
\text{ such that }
A_\sharp \overline{\mu}
=
(R_k)_\sharp(A_\sharp \overline{\mu})
\Bigr\}.
\]
By definition,  
\[
\Phi^{-1}\left( \mathcal S_{\mathrm{cyc}}\cap \mathcal E^{\mathrm{EnKU}}_0 \right) =  \left(\bigcup_{k\ge 2}\bigcup_{q\ge 1} C_{k,q}\right)\times \Pp_{2,\mathrm{nd}}(\R^m)\times \R^{n\times m}. 
\]
Therefore, to conclude meagreness of $\mathcal S_{\mathrm{cyc}}\cap \mathcal E^{\mathrm{EnKU}}_0$, it suffices to show that each $C_{k,q}$ is closed and disjoint from $\mathcal A$.  To prove closedness, let $\mu_\ell \in C_{k,q}$ with $\mu_\ell \rightarrow \mu$ in $W_2$. By definition of $C_{k,q}$, for each $\ell$ there exists $A_\ell \in K_q$ such that
\[
(A_\ell)_\sharp \overline{\mu_\ell}
=
(R_k)_\sharp\big((A_\ell)_\sharp \overline{\mu_\ell}\big).
\]
Since $K_q$ is compact, there exists a subsequence (not relabeled) such that $A_\ell \rightarrow A \in K_q$.  Because \(K_q\) is compact and the map \((\mu,A)\mapsto A_\sharp \overline{\mu}\) is continuous in \(W_2\), we can take limits of both sides of the equation and obtain $(A)_\sharp \overline{\mu}
=
(R_k)_\sharp\big((A)_\sharp \overline{\mu}\big).$ This shows  \(C_{k,q}\) is closed. Next, we show that \(C_{k,q} \cap \mathcal A = \varnothing\). Indeed, fix \(\mu\in \mathcal A\), and suppose that \(\mu\in C_{k,q}\). By definition, for some \(A\in K_q\), the finitely supported distribution \(A_\sharp \overline{\mu}\) is invariant under the nontrivial rotation \(R_k\). Write $S$ for the support and 
\[
A_\sharp \overline{\mu}=\sum_{y\in S} m_y \delta_y.
\]
Each mass \(m_y\) is a subset sum of the weights \(p_i\). Rotational invariance gives
\[
m_y=m_{R_k y}\qquad \text{for all }y\in S.
\]
Since all subset sums of the \(p_i\) are distinct, equality of masses implies equality of the corresponding subsets of indices in the underlying finite particle distribution $\mu$. In particular, \(R_k y=y\) for every support point \(y\). But a nontrivial planar rotation fixes only the origin, so \(S=\{0\}\). Therefore
\[
A(x_i-m(\mu))=0
\qquad\text{for all }i,
\]
which contradicts the fact that the centered support spans \(\R^n\) and \(A\neq 0\). Thus \(\mathcal A\cap C_{k,q}=\varnothing\).

\medskip
\noindent (b) Meagreness of \(\mathcal S_{\mathrm{dec}}\). For \(N\ge 1\), define
\[
L_N
:=
\Bigl\{
(v,w,\lambda)\in \C^n\times \C^m\times \C
:
\|v\|=1,
\|w\|\le N,
|\lambda|\le 1-\tfrac1N
\Bigr\},
\]
and
\[
X_N:=\Bigl\{
(\mu,\nu)\in \Pp_2(\R^n)\times \Pp_{2,\mathrm{nd}}(\R^m)\big|
\int|x|^2d\mu(x)\le N,
\int|y|^2d\nu(y)\le N
\Bigr\}.
\]
We then set \(D_N\subset X_N\) to be the set of all pairs \((\mu,\nu)\in X_N\) such that for some \((v,w,\lambda)\in L_N\),
\[
v^\top \overline Z
\stackrel d=
\sum_{j=0}^\infty \lambda^j w^\top \overline Y^{(j)},
\]
where \(Z\sim \mu\), \(Y^{(j)}\stackrel{\text{i.i.d.}}{\sim}\nu\), and all variables are independent. Since 
\[
\Phi^{-1}(\mathcal S_{\mathrm{dec}}\cap \mathcal E^{\mathrm{EnKU}}_0) \subseteq 
\left(\bigcup_{N\ge 1} D_N\right)\times \R^{n\times m}
\]
it suffices to show that every $D_N$ is closed in the relative topology of $\Pp_2(\R^n)\times \Pp_{2,\mathrm{nd}}(\R^m)$ and has empty interior. Define
\[
\Theta : \C^n\times \Pp_2(\R^n)\rightarrow \Pp_2(\C),
\qquad
\Theta(v,\mu):=\Law(v^\top \overline Z),
\quad Z\sim\mu,
\]
and
\[
\Omega : L_N\times \{\nu\in \Pp_{2,\mathrm{nd}}(\R^m)\big| \int |y|^2 d\nu(y)\le N\}\rightarrow \Pp_2(\C)
\]
by
\[
\Omega(v,w,\lambda,\nu)
:=
\Law\!\left(\sum_{j=0}^{\infty} \lambda^j w^\top \overline{Y}^{(j)}\right),
\qquad
Y^{(j)}\stackrel{\text{i.i.d.}}{\sim}\nu.
\]
We show that \(D_N\) is closed. Let \((\mu_\ell,\nu_\ell)\in D_N\) with \((\mu_\ell,\nu_\ell)\rightarrow(\mu,\nu)\) in \(W_2\). For each \(\ell\), choose \((v_\ell,w_\ell,\lambda_\ell)\in L_N\) such that
\begin{equation}
\label{eq:DN-sequence}
\Theta(v_\ell,\mu_\ell)=\Omega(v_\ell,w_\ell,\lambda_\ell,\nu_\ell).
\end{equation}
Since \(L_N\) is compact, after passing to a subsequence we may assume 
$(v_\ell,w_\ell,\lambda_\ell)\rightarrow (v,w,\lambda)\in L_N.$  We claim closedness, i.e.
\[
\Theta(v,\mu)=\Omega(v,w,\lambda,\nu). 
\]
The left-hand side of \eqref{eq:DN-sequence} converges to $\Theta(v,\mu)$ because \(\Theta\) is continuous.  By uniqueness of the limit, it is enough to show that $\Omega(v_\ell,w_\ell,\lambda_\ell,\nu_\ell)\rightarrow \Omega(v,w,\lambda,\nu)$ in $W_2$. Let
\[
X_\ell:=\sum_{j=0}^\infty \lambda_\ell^j w_\ell^\top \overline Y_\ell^{(j)},
\qquad
X:=\sum_{j=0}^\infty \lambda^j w^\top \overline Y^{(j)},
\]
where \((Y_\ell^{(j)},Y^{(j)})_{j\ge 0}\) are i.i.d.\ copies of an optimal coupling of \(\nu_\ell\) and \(\nu\). Then
\[
W_2^2(\Omega(v_\ell,w_\ell,\lambda_\ell,\nu_\ell), \Omega(v,w,\lambda,\nu))
\le
\|X_\ell-X\|^2_{L^2}
\]
where we write $\|X\|_{L^2}^2:= \E\|X\|_2^2$. Decomposing each summand as
$$
\lambda_\ell^j w_\ell^\top \overline Y_\ell^{(j)}
-
\lambda^j w^\top \overline Y^{(j)}
=
\lambda_\ell^j w_\ell^\top(\overline Y_\ell^{(j)}-\overline Y^{(j)})
+
\lambda_\ell^j (w_\ell-w)^\top \overline Y^{(j)} 
+
(\lambda_\ell^j-\lambda^j) w^\top \overline Y^{(j)},
$$
the triangle inequality gives
\[
W_2(\Omega(v_\ell,w_\ell,\lambda_\ell,\nu_\ell), \Omega(v,w,\lambda,\nu))\le \text{I}_\ell + \text{II}_\ell + \text{III}_\ell
\]
where elementary bounds show 
\begin{align*}
\text{I}_\ell
&:=
\sum_{j=0}^\infty |\lambda_\ell|^j
\bigl\|w_\ell^\top(\overline Y_\ell^{(j)}-\overline Y^{(j)})\bigr\|_{L^2} \le
{2N^2}W_2(\nu_\ell,\nu),\\
\text{II}_\ell
&:=
\sum_{j=0}^\infty |\lambda_\ell|^j
\bigl\|(w_\ell-w)^\top \overline Y^{(j)}\bigr\|_{L^2} \le
{N}^{3/2}\|w_\ell-w\|_2,\\
\text{III}_\ell
&:=
\sum_{j=0}^\infty |\lambda_\ell^j-\lambda^j|
\bigl\|w^\top \overline Y^{(j)}\bigr\|_{L^2} \le {N}^{7/2}|\lambda_\ell-\lambda|.
\end{align*}
where $r:=1-\frac1N.$ 
In particular, 
\[
W_2(\Omega(v_\ell,w_\ell,\lambda_\ell,\nu_\ell), \Omega(v,w,\lambda,\nu))
\le
\|X_\ell-X\|_{L^2}\rightarrow 0.
\]
We now show that each \(D_N\) has empty interior. Let \(\mathcal V\subset \Pp_{2,\mathrm{nd}}(\R^m)\) denote the set of laws with smooth density. This set is dense in \(\Pp_{2,\mathrm{nd}}(\R^m)\), for instance by convolution with a nondegenerate Gaussian of arbitrarily small variance.  Therefore, $\mathcal A\times \mathcal V$ is dense in $\Pp_{2}(\R^n) \times \Pp_{2,\mathrm{nd}}(\R^m)$ and to show that $D_N$ has empty interior it  suffices to prove that $D_N \cap (\mathcal A\times \mathcal V ) =\varnothing$.  Fix \((\mu,\nu)\in \mathcal A\times \mathcal V\). If \(v\neq 0\) and letting  $Z\sim \mu$, \(v^\top \overline{Z}\) is a nondegenerate finite atomic law, because \(\mu\) is finitely supported and the centered support of \(\mu\) spans \(\R^n\). On the other hand, if \(w=0\), then for $Y \sim \nu$
\[
\sum_{j=0}^{\infty} \lambda^j w^\top \overline{Y}^{(j)} \equiv 0
\]
is degenerate. If \(w\neq 0\), then \(w^\top \overline{Y}\) is non-atomic since \(\nu\) has a smooth density, and therefore
\[
\sum_{j=0}^{\infty} \lambda^j w^\top \overline{Y}^{(j)}
=
w^\top \overline{Y}^{(0)}
+
\sum_{j=1}^{\infty} \lambda^j w^\top \overline{Y}^{(j)}
\]
is also non-atomic as the convolution of a non-atomic law with an independent law. Thus equality in distribution of these two is impossible and
\[
(\mathcal A\times \mathcal V)\cap D_N=\varnothing.
\]
\end{proof}

\section*{Acknowledgments}
The authors acknowledge the use of AI tools for proofreading and language checking. All mathematical content and conclusions are solely those of the authors. 
\bibliographystyle{siamplain}
\bibliography{references}

\end{document}